\documentclass[12pt]{amsart}
\usepackage[utf8]{inputenc}

\usepackage[top=2cm, bottom=1cm, left = 2cm, right=2cm]{geometry}

\usepackage{amsmath}
\usepackage{amsfonts}
\usepackage{amssymb}
\usepackage{MnSymbol}
\usepackage{graphicx}
\usepackage{amsthm}
\usepackage[dvipsnames]{xcolor}
\usepackage{mathtools}
\usepackage{wrapfig}
\usepackage{multicol}
\usepackage{enumerate}
\usepackage[font={footnotesize}]{caption}
\usepackage[backref,colorlinks, citecolor={blue}, linkcolor={Red}]{hyperref}

\setcounter{MaxMatrixCols}{30}

\theoremstyle{plain}
\newtheorem{thm}{Theorem}[section] 

\theoremstyle{definition}
\newtheorem{definition}[thm]{Definition} 
\newtheorem{example}[thm]{Example} 
\newtheorem{lemma}[thm]{Lemma}
\newtheorem{remark}[thm]{Remark}
\newtheorem{proposition}[thm]{Proposition}

\def \re{\rm{Re}}
\def \im{\rm{Im}}

\def \R{{\mathbb R}}
\def \C{{\mathbb C}}
\def \Z{{\mathbb Z}}

\begin{document}

\begin{abstract}
{\footnotesize We provide an integral combinatorial characterization of pseudo-Anosov maps on closed oriented surfaces of genus $g>1$. We show that an orientation-preserving pseudo-Anosov homeomorphism with orientable foliations and fixing all critical trajectories can be encoded as a permutation of $2g+\nu-1$ positive integers, where $\nu$ is the number of singular points of the foliations (disregarding multiplicity). We call such a permutations an \textsc{ordered  block permutation} (OBP), and it satisfies an \textsc{admissiblity} condition. Conversely, we show that a surface along with measured foliations (up to scaling) and the pseudo-Anosov map can be uniquely constructed out of the data of an admissible permutation of $2g+\nu-1$ positive integers. In particular, for closed surfaces, we construct every orientable foliation invariant under a pseudo-Anosov homeomorphism.}
\end{abstract}

\title{Constructing pseudo-Anosov Maps from Permutations and Matrices}
\author{John Hubbard, Ahmad Rafiqi and Tom Schang}

\maketitle

\section{Introduction}
Thurston \cite{Th88} classified mapping classes of compact topological surfaces into three types - periodic, reducible and pseudo-Anosov. $f:S\to S$ (or sometimes its homotopy class $[f]$) is called \textsc{pseudo-Anosov} (pA in short) if there is a transverse pair of singular measured foliations on $S$ invariant under $f$, and a real number $\lambda>1$ called the \textsc{stretch-factor} that the two foliations are respectively stretched and shrunk by, under $f$. The stretch-factor is an important topological invariant of the mapping class, $\log(\lambda)$ being the topological entropy of $f$ (see \cite{Th14}), as well as the length of the geodesic corresponding to the mapping class $[f]$ in the moduli space of complete Riemannian metrics on $S$ under the Teichm\"uller metric (see e.g. the book \cite{H16}). Fried \cite{Fr85} showed that  $\lambda$ is an algebraic unit in the ring of algebraic integers of the number field $\mathbb{Q}(\lambda)$. Moreover, he shows that $\lambda$ is \textsc{bi-Perron}, meaning all its Galois conjugates $\mu_i$ satisfy $\{1/\lambda\leq |\mu_i|\leq\lambda\}$ with at most one conjugate on each boundary component.

It is an open problem to determine precisely what algebraic units occur as pseudo-Anosov stretch-factors, but this is an active area with lots of current research, see for instance \cite{ShStr16, Str17, LeStr20, Pan18, Lo19, Fa23, BRW19, RZ24}. It is this question on the precise nature of the stretch-factors that motivated the current paper. Although many constructions of pseudo-Anosov maps exist in the literature, (e.g. \cite{AY81, P88, V82}, to name just a few), we believe a characterization of pseudo-Anosov maps in terms of \emph{integers} will lend itself more easily to finding the algebraic \emph{integers} that arise as their stretch-factors. 

Lind \cite{L84} showed that all Perron (and therefore biPerron) numbers are eigenvalues of integer \textsc{aperiodic} matrices, i.e. non-negative matrices with some strictly positive power; thus proving a converse to the classical Perron-Frobenius theorem. Using this, Thurston \cite{Th14}, showed that logarithms of \textsc{Perron} numbers (algebraic $\lambda>1$ whose Galois conjugates $\mu_i$ satisfy $|\mu_i|<\lambda$), correspond exactly to the topological entropies of interval maps whose critical points have finite forward orbits.

The present paper builds on work done by one of the authors in a previous paper \cite{BRW16} that constructed orientation preserving or reversing pA maps from certain matrices of $0$'s and $1$'s. However, \cite{BRW16} only constructs a finite number of examples in each genus, each hyperelliptic, and the invariant folitations only have one or two singularities. While the current paper focuses on orientation preserving pA maps with orientable foliations that fix the critical trajectories, it manages to encode all of these, irrespective of the number and multiplicities of the singularities. 

Thurston's classification covers all compact surfaces, however we will assume our surfaces to be orientable and without boundary. Further, we restrict to orientation preserving maps, and since the pseudo-Anosov stretch-factors are our motivation, we may assume that the foliations of the map are orientable. Indeed, if this is not the case, the orientation double cover of the foliation may be taken, one with two points above every non-singular point, each with one of the two orientations of the leaf of the foliation through that point. The map then lifts to this branched double cover and the lifted map has the same stretch-factor. The advantage of doing so is that when the foliations are orientable, the induced isomorphism $f_*$ on  $H_1(S;\Z)$ has as largest eigenvalue the stretch-factor $\lambda$ of $[f]$.

The pair of invariant foliations of a pA map being orientable is equivalent to the foliations being the integral curves of the real and imaginary parts of a holomorphic $1$-form $\omega$ (i.e. an Abelian differential) on a Riemann surface structure on the surface (see \cite[Prop. 5.3.4]{H16}). This implies (but is not equivalent to) each singularity of the foliations having an even number of prongs. The pA homeomorphism fixes the set of zeros of $\omega$, while it multiplies its imaginary and real parts by $\lambda$ and $\lambda^{-1}$ respectively. We will call the foliation whose leaves are scaled by $\lambda$ under $f$ (resp. by $\lambda^{-1}$) as the \textsc{expanding} (resp. \textsc{contracting}) foliation. By a \textsc{critical trajectory} of a foliation we will mean a connected component of the complement of a zero within the singular leaf containing that zero. We will be concerned with orientation-preserving homeomorphisms which fix each critical trajectory of the foliations; we call such pseudo-Anosov homeomorphisms \textsc{oriented-fixed} (OF in short):

\begin{definition} A pseudo-Anosov homeomorphism $f$ is \textsc{oriented fixed} (OF) if its invariant foliations are orientable, and such that all the singularities and all the critical trajectories emanating from the singularities are fixed by $f$.
\end{definition} 

We use a well-known decomposition of the surface carrying a pA map, known as zippered rectangles (cf. \cite{HM79},\cite{V82}), albeit modified to the simpler setting of our OF maps. Given such an $f$, we \emph{choose} a finite segment of a critical trajectory of the contracting foliation, terminating at a singularity. This choice uniquely determines a decomposition of the surface into finitely many rectangles with identifications along the boundaries as we will see below. The decomposition as well as the pA map can then be encoded as a finite permutation $\sigma$ of size $n=2g+\nu-1$, and a list of positive integers $\textbf{k}=(k_1,\dots,k_n)$, where $\nu$ is the number of distinct singularities of the invariant foliations. The pair $(\sigma,\textbf{k})$ defines an \textsc{ordered block permutation} (OBP in short, Def. \ref{OBP}), and it satisfies a condition we call \textsc{admissible} (Def. \ref{admissibledef}). Our main result (cf. Thm. \ref{mainThm}) is then the following.

\begin{thm} A choice of a critical segment of the contracting foliation of an OF pseudo-Anosov map uniquely determines an admissible OBP. Conversely, an admissible OBP uniquely determines a closed oriented surface with a pair of transverse orientable foliations with measures unique up to scale, and an OF pseudo-Anosov map preserving the foliations.  
\end{thm}

We note that while the OBP depends on the choice of a critical trajectory of the contracting foliation and on the length of the segment, different choices of length produce at most $4g-3$ different OBPs. Since there are only finitely many singular trajectories to choose from, we can only obtain a finite number of different OBPs for a given OF pseudo-Anosov map.

The paper is organized as follows: in section \ref{SecRectangular} we recall the zippered rectangle decomposition of the surface carrying a pseudo-Anosov map, modified to our setting. In section \ref{SecCombData} we see how an OF pA map has associated to it a permutation of a list of integers that describe the OBP. In section \ref{SecAdmiss} we study the data of an OBP purely combinatorially and without reference to a surface, and define admissibilty. In section \ref{GeneralShape} we study the general shape of the rectangular decomposition, and in section \ref{SecProof} we provide the construction of the surface and the pA map from an admissible OBP, and prove the main theorem (Thm. \ref{mainThm}). 

\section{Rectangular decomposition}
\label{SecRectangular}

\subsection{Invariant holomorphic $1-$form of a pseudo-Anosov map.} In this section, let $S$ be a compact oriented surface (without boundary) of genus $g\ge2$, and $f:S \to S$ be an orientation-preserving pseudo-Anosov homeomorphism of $S$ with orientable foliations. 

As is well-known from the work of Bers \cite{B78}, and Hubbard-Masur \cite{HM79}, the expanding and contracting foliations of $f$ can be thought of as the integral curves of the imaginary and real parts of a holomorphic $1$-form $\omega\in\Omega^1(X)$, (also known as an \textsc{Abelian differential}), with respect to a Riemann surface structure $[\phi:S\to X]$. The pA map $f$ preserves the zeros of $\omega$, and scales the real and imaginary parts, 
\begin{equation}
	\label{invariantfoliations}
	f^*\,(\re\,\omega)\,=\,\lambda\,\re\,\omega\,\quad {\rm {and}} \quad f^*\,(\im\,\omega)\,=\,\frac{1}{\lambda}\,\im\,\omega
\end{equation}

In a neighborhood $U$ of a point $p\in X$, coordinate charts $z:U\to\C$ can be found such that in the coordinate $z$ on $U$, $\omega_p = z^m\,dz$. The multiplicity $m$ is zero except at a finite number of points of $X$, which correspond to the singularities. At a zero of multiplicity $m$, there are $2(m+1)$ critical trajectories for each of the expanding and contracting foliations. Half of these, $m+1$, are incoming and half outgoing. There is at least one singularity - in fact the sum of the multiplicities of the zeros of the holomorphic $1$-form $\omega$ equals $2g-2 = |\chi(S)|$, where $\chi(S)$ is the Euler characteristic of $S$. 

\subsection{Zippered Rectangles}
\begin{wrapfigure}{r}{0.2\textwidth}
  \begin{center}
  	\vspace{-.6cm}
    \includegraphics[width=0.15\textwidth]{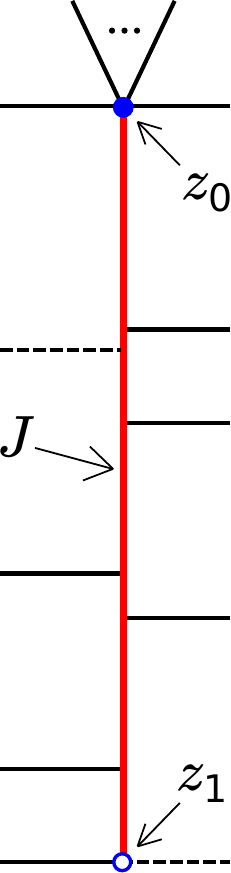}
    \caption{J in red.}
    \label{JNeigh}
  \end{center}
\end{wrapfigure}
We now describe the rectangular decomposition of $X$ associated to a pA map with oriented foliations. This construction appears in a lot of places in the literature, see for instance \cite{HM79}, or Proposition 5.3.4 in \cite{H16}. Choose a segment $\widetilde{J}$ of any positive length of a critical leaf of the contracting foliation terminating at a singularity $z_0\in X$. This is the only choice involved. Now any critical trajectory of the \emph{expanding} foliation, being dense in the surface and transverse to the contracting foliation, eventually intersects $\widetilde{J}$. So lead each critical trajectory of the expanding foliation from each zero of $\omega$ (including $z_0$) until the trajectory intersects $\widetilde{J}$ and then stop. Let $z_1$ be the point where the final expanding critical trajectory intersects $\widetilde{J}$, and let $J=[z_0, z_1]$. Call $\Gamma$ the union of all these critical expanding segments. As is tradition, we will be drawing the expanding foliation horizontally and the contracting foliation vertically, oriented to the right and upwards respectively.
 
We will assume (after possibly reversing the orientation of the horizontal foliation) that the final horizontal trajectory to meet $J$ (at $z_1$) intersects $J$ from the left. Continue this last horizontal trajectory past $z_1$ (the dashed edge in figure \ref{JNeigh}) till it meets $J$ again, and add it to $\Gamma$. We call this dashed edge the \textsc{non-singular edge}, as it contains no singularity, simply going from $J$ back to $J$. All other horizontal egdes drawn from $J$ lead to singularities. We make the assumption that the dashed edge is on the right of $z_1$ solely for brevity of exposition; the other possiblity of it being on the left is very similar, see Remark \ref{rmkrightadmissible} below. Moreover, after having chosen $J$, one can ensure the dashed edge is on the right side of $J$ by reversing the orientation of the expanding foliation.

\begin{figure}[h!]
\includegraphics[scale=.5]{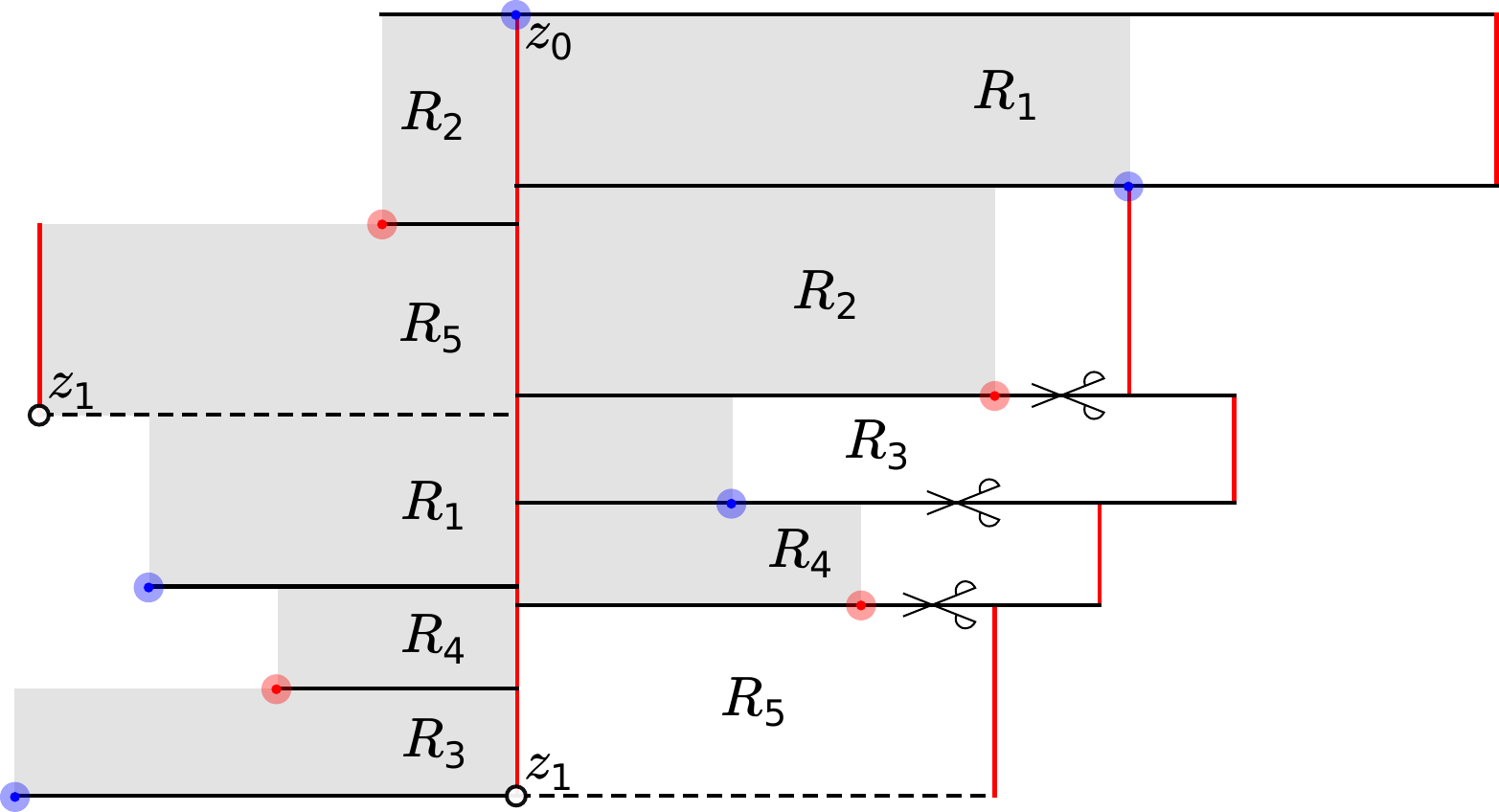}
\caption{ A genus 2 surface with two simple zeros (red and blue) has $2g+\nu-1=5$ rectangles, $R_1$ to $R_5$ drawn on the right of $J$, which cover the whole surface. To the right of each singularity there are slits (indicated by the scissors in the picture). If instead each rectangle is shaded on both the left and right of $J$ until the singularity on its bottom edge, the shaded region also covers the surface \emph{and} contains no slits in its interior. Gluings along the boundary can be determined by the order ($\sigma$) in which the rectangles are attached on the left. }
\end{figure}

The components of the complement of $\Gamma \cup J$ in $X$ are all metric rectangles for the Euclidean metric coming from the $1$-form $\omega$, namely the metric $dx^2+dy^2$ in the coordinate charts where $\omega=dz=dx+i\,dy$. The rectangles alternately have horizontal sides (on $\Gamma$) and vertical sides (on $J$). Let $\mathcal R$ be the set of these rectangles. Each horizontal edge of each rectangle contains a singularity of $\omega$, except the bottom edge of the last rectangle to the right of $J$. 

Number the rectangles in $\mathcal R$ as $R_1, \cdots, R_n$ as they are ordered on the right of $J$. This way $R_1$ is the rectangle with $z_0$ as its top-left corner, and $R_n$ has the non-singular point $z_1$ as its bottom-left corner. The rectangles cover the whole surface, $\bigcup_{i=1}^nR_i=X$, since otherwise the rectangles would form a subsurface whose boundary consists entirely of horizontal leaves, and pA foliations don't have any closed horizontal leaves.

\subsection{The image under $\mathbf{f}$ of the zippered rectangles}
The pseudo-Anosov $f$ stretches each $R_i$ horizontally by the stretch-factor $\lambda$ and shrinks it vertically by the same factor. Since each $R_i$ has its two vertical sides on $J$ and since $f(J)$ is the subinterval $J':=[z_0, f(z_1)]\subset J$, of length $|J'|= |J|/\lambda$, the thinner longer rectangle $f(R_i)$ must begin and end on $J'$. Beginning with its left edge on $J'$, the image $f(R_i)$ thus passes end-to-end within some number of rectangles, before ending with its right vertical edge on $J'$ again. Moreover, if $f(R_i)$ contained one of the drawn critical horizontal segments in its interior, $R_i$ would contain that same critical horizontal segment in its interior as well, since $f$ is a homeromorphism fixing each critical trajectory - but that is impossible since the $R_i$'s are in the complement of the drawn critical trajectories by construction. As a result, the image rectangles $f(R_i),\, i=1,\dots,n$, form thinner rectangular strips, stacked within the rectangles $R_i$ without overlap, and each strip going end-to-end in a single rectangle. We call these image subrectangles strands, (see figure \ref{strandspic}).

\begin{figure}[h!]
\includegraphics[scale=.5]{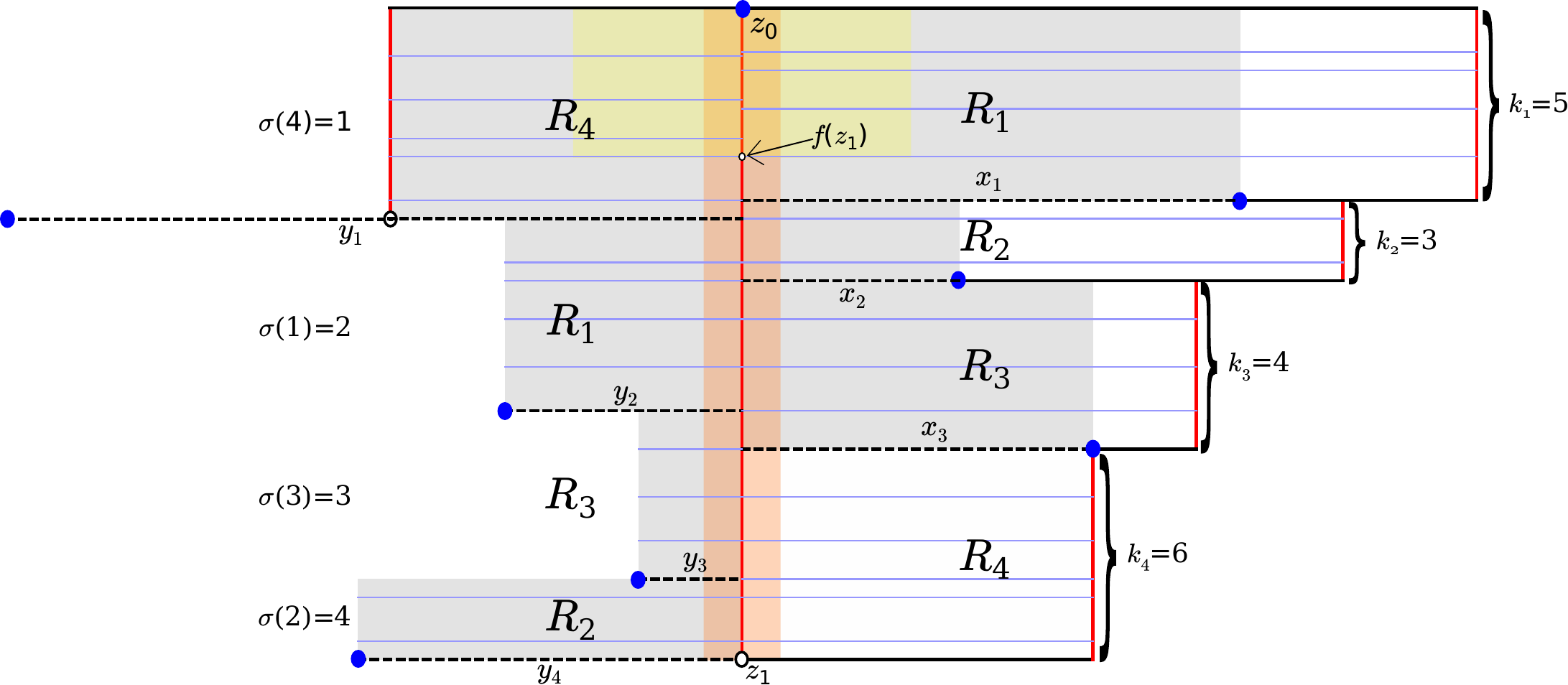}
\caption{A genus $2$ surface with one six-pronged singularity (blue), has $n=4$ rectangles in its decomposition. The highlighted orange neighborhood of $J$ has image the yellow neighborhood of $f(J)$. The blue lines bound the strands: the images of the $R_i$. Counting the strands in each rectangle gives $\textbf{k}=(5,3,4,6)$, so $K=18$, and the order of the $R_i$ on the left defines $\sigma$. Here $\sigma(1,2,3,4) = (2,4,3,1)$.}
\label{strandspic}
\end{figure}

\begin{definition}\label{strands} In the rectangular decomposition of an oriented-fixed map using a critical vertical segment $J$, and critical horizontal segments $\Gamma$ drawn on the surface as above, a \textsc{strand} is a connected component of $X\setminus(J\cup f(\Gamma))$.
\end{definition}

Except for the top-most $n$ strands, the strands to the left and right of the contracting segment $J$ match up exactly. That is, if $K$ is the total number of strands, for each $n<q\leq K$, the right edge of the $q^{th}$ strand to the left of $J$ is identified with the left edge of the $q^{th}$ strand to the right of $J$, (see figure \ref{strandspic}).

We can define an $n\times n$ matrix $A$ of non-negative integers whose $(i,j)^\text{th}$ entry  $a_{ij}$ is the number of times $f(R_j)$ crosses $R_i$. Namely, $a_{ij}$ is the number of strands in $R_i$ that belong to the image $f(R_j)$. The $i^\text{th}$ column of $A$ encodes the image of rectangle $R_i$. Suppose $R_i$ has horizontal length $l_i$ and vertical height $h_i$, measured using the invariant $\omega = dx+\iota\,dy$.

\begin{proposition} 
\label{eigslengthsheights}
The leading eigenvalue $\lambda$ of $A$ is the stretch-factor of $f$.  The vectors 
\[
\text {of lengths  } \quad {\bf l}=\begin{pmatrix}
l_1\\ \vdots\\ l_n\end {pmatrix}\quad  \text{and of heights\ \quad {\bf h}=} \begin{pmatrix}
h_1\\ \vdots\\ h_n\end {pmatrix} 
\]
are eigenvectors of $A^\top$ and $A$ respectively, each with eigenvalue $\lambda$:
\[
A^\top {\bf l}= \lambda {\bf l}\quad  \text{and}\quad A {\bf h} = \lambda {\bf h} .
\]
\end{proposition}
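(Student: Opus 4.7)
The plan is to read off both eigenvector equations directly from how $f$ acts on each rectangle, and then invoke Perron--Frobenius to identify $\lambda$ as the leading eigenvalue. The key geometric input is that on each $R_i$, the pseudo-Anosov map $f$ stretches horizontal lengths by $\lambda$ and contracts vertical heights by $1/\lambda$, so $f(R_i)$ is a long thin rectangle of horizontal length $\lambda\, l_i$ and vertical height $h_i/\lambda$. Moreover, as already observed in the excerpt, $f(R_i)$ decomposes into horizontal subrectangles of $\mathcal R$, each of which crosses some $R_j$ completely from one vertical side to the other (since no such subrectangle contains a singularity in its interior).

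For the length equation, I would compute the horizontal length of $f(R_i)$ in two ways. On the one hand it equals $\lambda\, l_i$. On the other hand, adding up the lengths of its constituent horizontal subrectangles --- of which $a_{i,j}$ cross $R_j$, each contributing length $l_j$ --- gives $\sum_j a_{i,j}\, l_j$. Equating the two yields $A\mathbf{l}=\lambda\mathbf{l}$.

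For the height equation I would dualize the picture: fix a target rectangle $R_j$ and stack the horizontal subrectangles that fill it. Each such piece lies in some $f(R_i)$ and therefore has vertical extent equal to the full height of $f(R_i)$, namely $h_i/\lambda$. Since exactly $a_{i,j}$ such pieces come from $f(R_i)$, adding their heights up to the total height $h_j$ gives $h_j = \sum_i a_{i,j}\, h_i/\lambda$, i.e.\ $A^\top \mathbf{h}=\lambda\mathbf{h}$.

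Finally, the leading-eigenvalue claim follows from Perron--Frobenius: $A$ has non-negative integer entries and $\mathbf{l}$ is a strictly positive eigenvector with positive eigenvalue $\lambda$, so $\lambda$ is forced to equal the spectral radius of $A$. The step I would want to be most careful about is the height argument, since it tacitly uses that the horizontal subrectangles tile $R_j$ with the correct multiplicities $a_{i,j}$; one needs to verify that the NS edge and the rectangles touching it do not disrupt this tiling, which should follow from $f$ being a homeomorphism that sends $\mathcal R$ onto the collection of horizontal subrectangles of $\mathcal R$.
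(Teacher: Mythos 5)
Your proposal is correct and follows essentially the same argument as the paper: computing the horizontal length of $f(R_i)$ in two ways for $A\mathbf{l}=\lambda\mathbf{l}$, and stacking the crossing strands of height $h_i/\lambda$ inside $R_j$ for $A^\top\mathbf{h}=\lambda\mathbf{h}$. Your explicit Perron--Frobenius step identifying $\lambda$ as the spectral radius (via positivity of $\mathbf{l}$) is left implicit in the paper but is a worthwhile addition, and your caution about the tiling of $R_j$ is resolved exactly as you suggest, since the images $f(R_i)$ tile the surface.
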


\begin{proof} For a fixed $j$, consider the image of $R_j$, which has length $\lambda l_j$. This image is made up of strands, precisely $a_{ij}$ strands in each $R_i$, which have length $l_i$. Hence the sum $\sum_i a_{ij} l_i$ is exactly the length of $f(R_j)$, i.e., $\lambda l_j= \sum_i a_{ij} l_i$. Thus $\lambda {\bf l}= A^\top{\bf l}$.

Similarly, $\sum_j a_{ij} (h_j/\lambda)$ is the sum of the heights of the $f(R_j)$ crossing $R_i$, which has height $h_i$.  This leads to  $ h_i= \sum_j a_{ij} (h_j/\lambda)$.  Thus $\lambda{\bf h}= A {\bf h}$.
\end{proof}

\subsection{The number of rectangles $\mathbf{n=2g+\nu-1}$}
Suppose the Riemann surface $X$ of genus $g>1$ with an OF pA map $f$ has been decomposed into rectangles $R_1,\cdots,R_n$, after having chosen a critical vertical segment. Suppose the $\nu$ singularities of the foliations have multiplicities $m_1,\dots,m_\nu$, where each $m_i\geq1$. 

By construction, for each $i=1,\dots,(n-1)$, the intersection $R_i\cap R_{i+1}$ contains an incoming critical horizontal trajectory drawn from a point of $J$ to a singularity. As in the figure above, we will label these \textsc{incoming} critical edges as $x_1, x_2, \cdots, x_{n-1}$. Similarly, to the left of $J$, for each $i=1,\cdots,n$, there is a critical horizontal segment drawn at the bottom of the $i^{th}$ rectangle. These emanate from singularities, and terminate at $J$, so we will call these \textsc{outgoing} critical tranjectories, $y_1,\cdots,y_n$. Note that the edge $y_{\sigma(n)}$ below $R_n$ on the left of $J$ is also the edge at the bottom of the figure. 

On the other hand, at a zero of multiplicity $m_i$, there are $m_i+1$ incoming horizontal trajectories. Since the sum of multiplicities is $m_1+\cdots+m_\nu = 2g-2$, we see that \begin{equation}\label{n=2g+nu-1}
n-1=\sum_{i=1}^\nu(m_i+1)=2g-2+\nu,\quad\text{whence,}\quad \fbox{$n=2g+\nu-1$}
\end{equation} 

Note that $g\geq2$ implies $n\geq4$. In fact, since $1\le\nu\le2g-2$, we obtain bounds on $n$ and $g$ in terms of each other 
\begin{equation}\label{ngBounds}\begin{gathered}\boxed{2g\,\le\, n\,\le\,4g-3}\\
\boxed{\frac{n+3}{4}\,\le\, g\,\le\, \frac{n}{2}}
\end{gathered}\end{equation}

\section{The combinatorial data of an Oriented-Fixed pA map}  
\label{SecCombData}

\subsection{The permutation $\mathbf{\sigma}$ and the integers $\mathbf{(}k_1,...,k_n)$  }

We can define the permutation $\sigma$ by saying that the $i^{th}$ rectangle on the right side of $J$, namely $R_i$, is glued in position $\sigma(i)$ on the left side of $J$. Thus, if seen to the left of $J$, the top-most rectangle is $R_{\sigma^{-1}(1)}$, and $R_1$ is glued on the left of $J$ in position $\sigma(1)$.

As discussed above definition \ref{strands}, the image rectangles $f(R_i),\, i=1,\dots,n$, form thinner rectangular strands, stacked within the rectangles $R_i$ without overlap, and each strip going end-to-end in a single rectangle. We then define the positive integers $k_i$ to be the number of strands that pass through the rectangle $R_i$. 

Together, the permutation $\sigma$ and the ordered list of positive integers $\textbf{k}=(k_1,...,k_n)$ consist of the combinatorial data that encodes the complete structure of an oriented-fixed pseudo-Anosov map. It satisfies certain admissibility conditions and, as we will see in section \ref{SecProof}, whenever a pair $(\sigma, \mathbf{k})$ satisfies admissibility, it can be used to construct a unique OF map.

\subsection{Ordered Block Permutations}
\begin{wrapfigure}{r}{0.15\textwidth}
  \begin{center}
   \vspace{-.9cm}
   \includegraphics[width=0.14\textwidth]{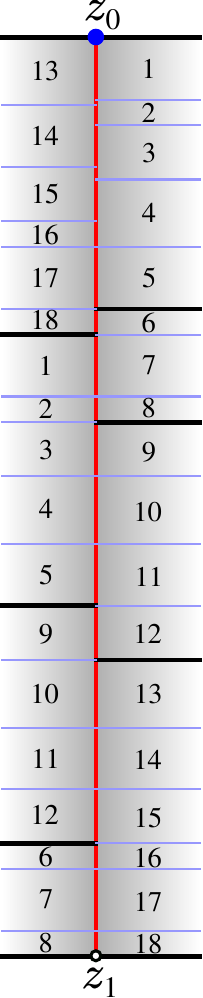}
   \caption{\textcolor{white}{.}}
   \label{strandMap}
  \end{center}
\end{wrapfigure}
To work with the combinatorial data ($\sigma$, $\mathbf{k}$), we number the strands as they are ordered on the right side of $J$, from $1$ to $K:=k_1+\cdots+k_n$. The order in which the strands appear on the left of $J$ defines a permutation $\tau$ of the bigger set $\mathbb{N}_K=\{1,\dots,K\}$. On the right is shown the case for the example in figure \ref{strandspic}, where for instance $\tau(1)=7$ since the $1^{st}$ strand is attached in the $7^{th}$ position on the left of $J$.  This permutation is determined by $\sigma$ and $\textbf{k}=(k_1,\dots,k_n)$, and is what we define below as the ordered block permutation $\tau=\tau_{\sigma,\textbf{k}}$, named so due to its structure of permuting blocks of numbers together. For its definition, we first define the ``\emph{block-number}" function $\beta:(1,\dots,K)\to(1,\dots,n)$, defining $\beta(j)=i$ if the $j^{th}$ strand lies in the $i^{th}$ rectangle $R_i$.

\begin{definition}
\label{OBP}
The \textsc{ordered block permutation} (OBP) $\tau=\tau_{\sigma, {\bf k}}$ is a permutation of the set $\mathbb{N}_K$ defined by
\begin{equation}
\label{OBPEquation}
\hspace{-2cm}\tau_{\sigma, {\bf k}}(j) := \sum_{1\le i<\sigma(\beta(j))}k_{\sigma^{-1}(i)} +j - \sum_{1\le i<\beta(j)}k_i
\end{equation}
\end{definition}

$\tau(j)$ is simply the position in which the $j^{th}$ strand is attached to $J$ on the left. To understand the formula above, note that the $j^{th}$ strand, $1\le j\le K$, is the  $(j - \sum_{1\le i<\beta(j)}k_i)^{th}$ strand of the rectangle $R_{\beta(j)}$. And on the left side of $J$, the number of strands are in the order $(k_{\sigma^{-1}(1)},\cdots,k_{\sigma^{-1}(n)})$, so the first sum on the right in equation \ref{OBPEquation} simply counts how many strands lie above the position where $R_i$ is placed. 

Now, a strand among the first $n$, say strand $j$, with  $1\le j\le n$, is the first strand of the image of $R_j$. Its vertical right side glued to the left side of strand $\tau(j)$. If the $\tau(j)^{th}$ strand isn't one of the first $n$ on the left, then the right side of strand $\tau(j)$ is glued to the left side of strand $\tau(\tau(j))$. This continues until some $p_j>1$ such that $1\le\tau^{\circ p_j}(j)\le n$. The collection of the $p_j$ strands $\{j,...,\tau^{\circ (p_j-1)}(j)\}$ of the $\tau$-orbit of $j$ provide the full image of rectangle $R_j$ under $f$. We call this ordered set of numbers $O_j:= (j,...,\tau^{\circ(p_j-1)}(j))$ the \textsc{orbit} of $j$, where $ 1\leq j\leq n$. Namely the orbit stops as soon as the next image under $\tau$ is back in the subset $\mathbb{N}_n\subset\mathbb{N}_K$. Note the orbits $O_j$ are disjoint, and that for $j\in\mathbb{N}_n$, the first return under iteration of $\tau$ to $\mathbb{N}_n$ is the same as $\sigma$. E.g. in figure \ref{strandMap}, $O_4=(4,10,13)$, so $p_4=3$ and $\tau^{p_4}(4)=1$ same as $\sigma(4)=1$.

\begin{remark}
\label{rmkrightadmissible}
If we instead look to the left of $J$ and flip the horizontal foliation, obtaining the conjugate surface $\bar{X}$, the permutation $\sigma$ and the vector $\mathbf{k}=(k_i)_{i=1}^n$ of positive integers, change to $\sigma^{-1}$ and $(k_{\sigma^{-1}(i)})_{i=1}^n$. The corresponding OBP $\tau_{\sigma^{-1},k_{\sigma^{-1}(*)}}$ is just the inverse $(\tau_{\sigma,\mathbf{k}})^{-1}$. The orbit of $\sigma^{-1}(i)$ continues as the orbit of $i$ backwards, without the $i$, $i$ being it's first return under $\tau^{-1}$. The corresponding matrix is $(A_{\sigma^{-1}(i),\sigma^{-1}(j)})_{i,j=1}^n$, it is also a non-negative matrix, in fact $A_{\tau^{-1}}=P_\sigma AP_{\sigma}^{-1}$ where $P_\sigma$ is the permutation matrix $(\mathbf{e}_{\sigma(1)},\cdots,\mathbf{e}_{\sigma(n)})$, with $P_{\sigma^{-1}}=P_\sigma^{-1}=P_\sigma^\top$.
\end{remark}

\section{Admissible OBPs}\label{SecAdmiss}
\subsection{The combinatorial data}
Here we provide a purely combinatorial way of describing the data of these rectangles, without reference to a surface or map. Let $\sigma$ be a permutation of $\mathbb{N}_n=\{1,2,\cdots,n\}$. Let ${\bf k}=(k_1, \cdots, k_n)\in\mathbb{N}^n$ be $n$ positive integers, and set $K=k_1+\cdots+k_n$.

Partition the ordered set $(1,\dots, K)$ into \textsc{blocks} $B_1, \dots, B_n$, where $B_1$ consists of the ordered set of the first $k_1$ integers $(1,\dots,k_1)$, $B_2$ contains the next $k_2$ elements $(k_1+1,\dots, k_1+k_2)$, and in general for $1\le i\le n$ we have  
\begin{equation}
\label{Block_i}
B_i= (\sum_{j<i}k_j+1,\cdots, \sum_{j\le i}k_j).
\end{equation}
If the strand $j\in(1,\dots,K)$ belongs to block $B_i$, we define $\beta(j)=i$, $\beta:(1,\dots,K)\to(1,\dots,n)$. Using $\beta$ we can define the OBP $\tau$ with equation \ref{OBPEquation} above, (cf. example \ref{firstexample}).

We smiliarly define orbits $O_i$ for each $1\leq i\leq n$ as the ordered set $O_i := (i,...,\tau^{\circ(p_i-1)}(i))$, where $p_i>1$ is the smallest integer such that $\tau^{\circ\,p_i}(i)\in \mathbb{N}_n\subset\mathbb{N}_K$. Here $p_i$ is guaranteed to exist as $\tau$ being a permutation of $\mathbb{N}_K$ has a cycle decomposition. Define $\tau'=\tau'_{\sigma, {\bf k}}:\mathbb{N}_n\to\mathbb{N}_n$ by $\tau'(i)=\tau^{\circ p_i}(i)$, so $\tau'$ is the first return map under iteration of $\tau$ to the subset $(1,...,n)\subset(1,...,K)$.

Define the matrix $A$ by setting the entry $a_{ij}$ in the $i^{th}$ row and the $j^{th}$ column of $A$ equal to the number of elements of the $i^{th}$ block $B_i$ that are in the $j^{th}$ orbit $O_j$. That is, $a_{ij}$ is the number of elements in the intersection: \begin{equation}\label{A}
a_{ij} \quad=\quad |B_i\cap O_j|
\end{equation} 

Entries in the $j^\text{th}$-column of $A$ add up to $p_j:=$  the length of the orbit $O_j$, (cf. the number of strands in the image of $R_j$). And when the union of the orbits is all the strands, $\bigcup_iO_i=\mathbb{N}_K$, then the $i^\text{th}$-row of $A$ adds up to $k_i=$ the size of block $B_i$, (cf. the number of strands that pass $R_i$).

\subsection{Admissibility of the OBP $\tau_{\sigma,\mathbf{k}}$}
With a slight abuse of notation, we will also call the smallest and largest elements of each block $B_i$, namely elements $\sum_{j<i}k_j+1$ and $\sum_{j\le i}k_j$, as the \textsc{top} and \textsc{bottom strands} of $B_i$ respectively.  

\begin{definition} \label{admissibledef}
An ordered block permuation $\tau=\tau_{\sigma,\mathbf{k}}$ is \textsc{admissible} if 
\begin{enumerate}
\item[\textsc{(i)}]
$\tau'=\sigma$;
\item[\textsc{(ii)}]
$\displaystyle\bigcup_{i=1}^n O_i= \{1,\dots,K\}$;
\item[\textsc{(iii)}] Each orbit $O_i$ includes the top and bottom strand of block $B_i$, except the last strand, strand $K\in B_n$, belongs to $O_{\sigma^{-1}(n)}$;
\item[\textsc{(iv)}] The matrix $A$ defined by $a_{ij} = |B_i\cap O_j|$ is irreducible;
\end{enumerate}
\end{definition}

\begin{lemma}
\label{OFpA-OBP}
An ordered block permutation associated to an oriented-fixed pseudo-Anosov homeomorphism is admissible.
\end{lemma}
\begin{proof} Suppose the OBP $\tau_{\sigma, \mathbf{k}}$ is computed from OF pseudo-Anosov homeomorphism as above by choosing some critical vertical segment $J$. Condition \textsc{(i)} expresses the fact that  $f(J)=J'$, so everything you see for blocks along $J$ has to be true of strands along $J'$. Condition \textsc{(ii)} is satisfied since the union $\bigcup_i R_i$ is the whole surface, so is the union $\bigcup_i f(R_i)$. Condition \textsc{(iii)} expresses that all the singularities, and all the leaves emanating from singularities, are fixed by $f$. Thus the singularity on the bottom of $R_i$ must also be on the bottom of $f(R_i)$, so $f(R_i)$ must contain that point as the bottom of some strand, and similarly for the top. Since the last singularity is on the left and $\lambda>1$, $f(R_{\sigma^{-1}(n)})$ continues past $z_1$ and contains the bottom strand of $R_n$.  Condition \textsc{(iv)} is more than satisfied, the matrix $A$ is aperiodic: $A^n$ has strictly positive entries, (see e.g. \cite{Th14}).
\end{proof}

\begin{example}
\label{firstexample}{\footnotesize Here we compute the OBP for the example in figure \ref{strandspic}, with $n=4$, $\sigma = \begin {pmatrix}
 1&2&3&4\\2&4&3&1\end{pmatrix}$ and  $\textbf{k}=(5,3,4,6)$, so $K=18$: The blocks are $B_1=(1,2,3,4,5),\,B_2=(6,7,8),\,B_3=(9,10,11,12),\,B_4=(13,14,15,16,17,18)$, and the permutation $\tau$ is 
\begin{equation}
\tau_{\sigma, {\bf k}} =
\begin{pmatrix}
 1 &  2 & 3 & 4 & 5 & & 6 & 7 & 8 & & 9 & 10 & 11 & 12 & & 13 & 14 & 15 & 16 & 17 & 18\\
 7 & 8 & 9 & 10 & 11 & & 16 & 17 & 18 & & 12 & 13 & 14 & 15 & & 1 & 2 & 3 & 4 & 5 & 6
\end{pmatrix}
\end{equation}
Since $\sigma(4)=1$, the fourth block $B_4=(13,14,15,16,17,18)$ is moved to the first place under $\tau$, so its image is $(1,2,3,4,5,6)$. Continuing this way, as $\sigma(1)=2$, $\tau$ sends the first block $B_1=(1,2,3,4,5)$ to $(7,8,9,10,11)$, and so on.

The orbits of $\tau$ are \begin{align*}
O_1&= ({\color{Green} 1},7, 17, \textcolor{Purple}{5}, 11, 14), [2] & & O_2 = (2, \textcolor{Purple}{8}, \textcolor{Purple}{18}, {\color{Green} 6}, 16) ,[4]\\ 
O_3&= (3,{\color{Green} 9},{\color{Purple} 12},15),[3] & & O_4 = (4,10,{\color{Green} 13}),[1], 
\end{align*}
where we have colored in green the top strands and in purple the bottom strands. Each orbit $O_i$ contains the top and bottom strand of block $B_i$, except $O_n$ doesn't contain the bottom strand of $B_n$, strand $18 = K$, which belongs to $O_{\sigma^{-1}(n)}=O_2$, as required by admissibility.

In square brackets at the end of each orbit is the first return to $(1,2,3,4)$, namely $\tau'$, which is indeed equal to $\sigma$ for this example and the union of the orbits is also all $18$ strands. The matrix $A$ defined as $a_{i,j}=|B_i\cap O_j|$ is \[A=\left(\begin{matrix}
2 & 1 & 1 & 1\\
1 & 2 & 0 & 0\\
1 & 0 & 2 & 1\\
2 & 2 & 1 & 1\\
\end{matrix}\right)\] which is irreducible (in fact aperiodic as $A^2>0$), so combinatorially, $(\sigma, {\bf k})$ is indeed an admissible OBP. The characteristic polynomial of $A$ is $1-7 x+13 x^2-7 x^3+x^4$ and its Perron root $\lambda $ $\sim 4.39026$ equals $\lambda = \left(7+\sqrt{5}+ \sqrt{38+14\sqrt{5}}\right)/4$.}

\qed
\end{example}
 
Next we list some consequences of the combinatorial definition of an admissible OBP, which will be useful in what follows. 

\begin{lemma}\label{lem}
Suppose $\sigma$ is a permutation of $\mathbb{N}_n$ and $\mathbf{k}\in\mathbb{N}^n$ such that the ordered block permutation $\tau_{\sigma,\mathbf{k}}$ is admissible. Further let $A=(a_{ij})$ be the matrix associated to $\tau$, and $\lambda$ the biggest eigenvalue of $A$ in absolute value. Then
\begin{enumerate}[(i)]
\item $k_i\geq2,\,\,\forall i\in\mathbb{N}_n$. On the diagonal, $a_{nn}\geq1$, and the rest $a_{ii}\geq2,\,\,\forall i<n$.
\item $A$ is aperiodic $A^n>0$, and $\lambda>2$.
\item $k_1\ge n$ and $k_{\sigma^{-1}(1)}\ge n$.
\item $\sigma(1)\neq1\neq\sigma^{-1}(1)$ and $\sigma(n)\neq n\neq\sigma^{-1}(n)$. More generally $\nexists\,\,j<n$ such that $\sigma(\{1,\cdots,j\})\subseteq\{1,\cdots,j\}$, and similarly for $\sigma^{-1}$.
\end{enumerate}
\end{lemma}

\begin{proof}
Part (i): Suppose some $k_i=1$. Then the $i^{th}$ row of $A$ consists entirely of $0$s except for a $1$ in the $i^{th}$ position, as $O_i$ must contain the top strand of $B_i$. If $P$ is the matrix obtained by switching the $i^{th}$ and $n^{th}$ rows of the identity matrix $I_n$, we see that  $P^{-1}AP=\left(\begin{smallmatrix} A_{ii} & *\\ \mathbf{0} & 1 \end{smallmatrix}\right)$, where $A_{ii}$ is the matrix formed by deleting the $i^{th}$ row and column of $A$, and $\mathbf{0}$ is a row of $n-1$ zeros. Hence $A$ is reducible, contradicting part \textsc{(iv)} of admissible (\ref{admissibledef}). 

Thus the $k_i\geq2\,\,\forall i$, so the top and bottom strands of the blocks $B_i$ are distinct. Since $\forall i<n$, the orbit $O_i$ contains the top and bottom strands of $B_i$, the entries $a_{ii}=|O_i\cap B_i|\geq2,\,\,\forall i<n$. For $i=n$, we get $a_{nn}\geq1$ since $O_n$ may only contain the top strand of $B_n$.

For (ii), we repeat a well-known proof that a non-negative irreducible matrix with a positive diagonal is aperiodic: Associated to an integer non-negative $n\times n$ matrix $A$ is an oriented graph with $n$ vertices $v_1,\cdots,v_n$, where the number of edges from $v_i$ to $v_j$ is exactly $A_{ji}$. $(A^m)_{ji}$ counts the number of \emph{oriented paths of length $m$} from $v_i$ to $v_j$. Irreducibility of $A$ is equivalent to there \emph{being} an oriented path between any two vertices, while aperiodicity is equivalent to there being an oriented path \emph{of length $n$} from any $v_i$ to $v_j$. Since $\forall i\,\,a_{ii}\neq0$ (part (i)) we know there is a loop at every vertex. Since there are only $n$ vertices, by irreducibility there is a path of length $\leq n$ from any vertex to any other. By adding loops to a path of length less than $n$, we can ensure there is a directed path of length exactly $n$ between any two vertices, hence $A^n>0$.

Since $A$ is aperiodic, by the classical Perron-Frobenius theorem \cite[Ch. XIII, Thms. 2 and 8]{G74}, it has a simple positive eigenvalue $\lambda$ of maximal absolute value. Note that the $k_i$ can not all be equal, otherwise the blocks would line up on the left and right, and orbits would contain top and bottom strands of multiple blocks, contradicting \ref{admissibledef}\textsc{(iii)}. And since the $k_i$ form the row-sums of the matrix $A$, we have $\displaystyle\min_i\{k_i\}<\lambda<\max_i\{k_i\}$ \cite[Ch. XIII, Ineq. 37]{G74}. As $k_i\geq2$, we get $\lambda>2$.  

For (iii), suppose $k_1<n$. Then the $(k_1+1)^{st}$ strand, being one of the first $n$ strands, is the first element of  orbit $O_{k_1+1}$. But it is the top strand of $B_2$, which can only belong to $O_2$. So $k_1=1$, which contradicts part (i) above. To show $n\le k_{\sigma^{-1}(1)}$ we consider the OBP of $(\sigma^{-1},k_{\sigma^{-1}(i)})$ (see Remark \ref{rmkrightadmissible}) which satisfies admissibility, except that in part (\textsc{iii}), $O_n$ contains the bottom strand of $B_{\sigma^{-1}(n)}$.   

For (iv), note that $\sigma(\mathbb{N}_j)\subseteq\mathbb{N}_j$ iff $\sigma(\mathbb{N}_j)=\mathbb{N}_j$ iff $\sigma^{-1}(\mathbb{N}_j)\subseteq\mathbb{N}_j$, since $\sigma$ is a permutation, hence bijective. If $\sigma(\mathbb{N}_j)=\mathbb{N}_j$, by part (iii) all orbits remain within the first $j$ blocks, hence never pass through $B_{j+1}$, contradicting \ref{admissibledef}\textsc{(ii)}. The special cases follow by setting $j=1$ or $j=n-1$.
\end{proof} 

\section{The general shape}\label{SecGeneralShape}

\subsection{Structure of zippered rectangles}
In this section we will study the general structure of the zippered rectangles in our decomposition of an OF pA map. By construction, each rectangle $R_1,\cdots, R_n$ has a singularity on its top edge: for $R_1$ it is at the top-left corner; for $R_{\sigma^{-1}(1)}$ it is at its top-right corner; and for the rest of the $R_i$, it is in the interior of their top edge. Also, each $R_1,\cdots,R_{n-1}$ has a singularity in the interior of its bottom edge - the bottom horizontal edge of $R_n$ doesn't contain a singularity. 

We will refer to the part of the bottom horizontal edge of $R_i$ before (resp. after) the singularity as the \textsc{bottom-left} (resp. \textsc{bottom-right}) edge of $R_i,\,\,\forall 1\leq i<n$. Similarly define \textsc{top-left} and \textsc{top-right} edges of $R_i, \forall 1<i\leq n$. 

\begin{figure}[h!]
\begin{center}
\includegraphics[scale=.27]{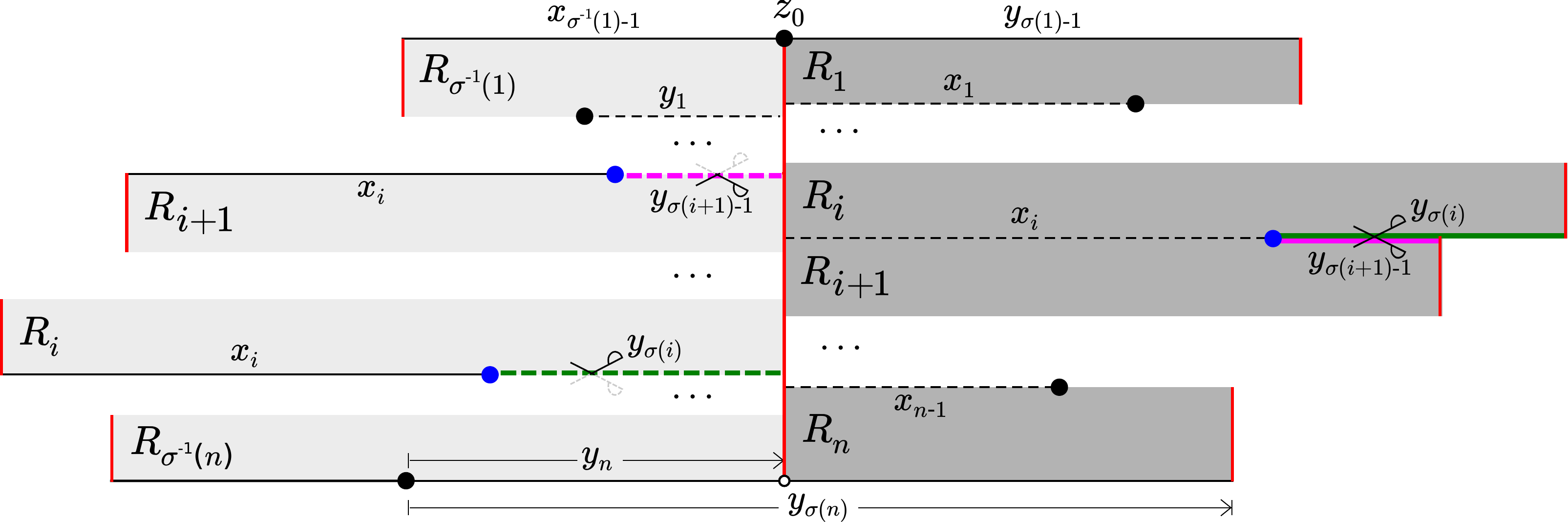}
\caption{\footnotesize In general, $R_1,\cdots, R_n$ are glued along edges $x_1,\cdots,x_{n-1}$ until the singularities. Beyond the singularities, gluings  can be determined by $\sigma$ - the order of $R_i$ on the left of $J$ - along edges $y_1,\cdots,y_n$. Since $R_1$ arrives in position $\sigma(1)$ on the left, its top edge is $y_{\sigma(1)-1}$, well-defined by Lemma \ref{lem}(iv). Similarly, to the left of $z_0$ is the edge $x_{\sigma^{-1}(1)-1}$. Below $R_n$ is the rectangle $R_{\sigma^{-1}(\sigma(n)+1)}$ whose top-right edge $y_{\sigma(n)}$ is glued first to the bottom-right of $R_{\sigma^{-1}(n)}$ along $y_n$, and the rest past $z_1$ to the bottom of $R_n$.}
\label{GeneralShape}
\end{center}
\end{figure}

For each $i\in\mathbb{N}_{n-1}$, the bottom-left of $R_i$ is glued to the top-left of $R_{i+1}$ along the critical edge $x_i$ until the singularity between them. The top edge of $R_{\sigma^{-1}(1)}$, is the edge $x_{\sigma^{-1}(1)-1}$ which is amongst $x_1,\cdots,x_{n-1}$, since {\small $ 2\leq\sigma^{-1}(1)\leq n$} by Lemma \ref{lem}(iv). Similarly, for instance, the bottom-left edge of $R_{\sigma^{-1}(n)}$ is the edge $x_{\sigma^{-1}(n)}$, which is also amongst the $x_i$, since {\small $\sigma^{-1}(n)<n$}. These take care of all the identifications along \emph{incoming} critical horizontal edges.

Similarly to the left of $J$, between the rectangles are the critical edges $y_1,\cdots,y_{n-1}$. In addition $y_n$ is the bottom-right edge of $R_{\sigma^{-1}(n)}$. For each $i\in\mathbb{N}_{n-1}\setminus\{\sigma(n)\}$, the bottom-right of $R_{\sigma^{-1}(i)}$ is glued to the top-right of $R_{\sigma^{-1}(i+1)}$ along $y_i$ from the singularity between them until $J$. 

Among the edges $y_1,\cdots,y_{n-1}$ is the edge $y_{\sigma(n)}$, right below where $R_n$ is attached on the left of $J$, which requires a little attention since $R_n$ doesn't contain a singularity on its bottom edge (cf. figures \ref{Case1}, \ref{Case2}). The edge $y_{\sigma(n)}$ starts from the singularity on the bottom edge of $R_{\sigma^{-1}(n)}$ and continues past the non-singular point $z_1$ till the end of $R_n$. Thus $y_n$ is a subset of $y_{\sigma(n)}$, the latter exceeding it by $l_n$, the length of $R_n$. This takes care of all the identifications along \emph{outgoing} critical horizontal edges. The identifications along the vertical sides all happen on $J$, and are clear. 

Thus, for instance, the bottom-left of $R_1$ is the edge $x_1$ whereas its bottom-right is the edge $y_{\sigma(1)}$. The top edge of $R_1$ is $y_{\sigma(1)-1}$, which is among $y_1,\cdots,y_{n-1}$, again by Lemma \ref{lem}(iv), and it may equal $y_{\sigma(n)}$.

\subsection{Minimality of the permutation $\sigma$.}
When an OBP is defined using an OF pA map, all the singularities of the Abelian differential have multiplicities $m\geq1$. That is, at every singularity, the cone angle is one of $\{4\pi, 6\pi, 8\pi,\cdots\}$. But when an admissible OBP is combinatorially defined, it may be that the cone angle at a ``singularity" is $2\pi$, so the corresponding point is not a singularity, just a fixed-point of $f$. The formula (2) and bounds (3) and (4) no longer hold, so we would like to avoid this possibility.

The multiplicities of the singularities depend entirely on the permutation $\sigma$, so in this section we study the properties of $\sigma$ that guarantee that no singularity has multiplicity $m=0$. Note that apart from the singularities at the very top and bottom, the neighborhood of a singularity on $R_i$ is determined as in the figure \ref{around0}.

\begin{figure}[h!]
\begin{center}
\includegraphics[scale=.24]{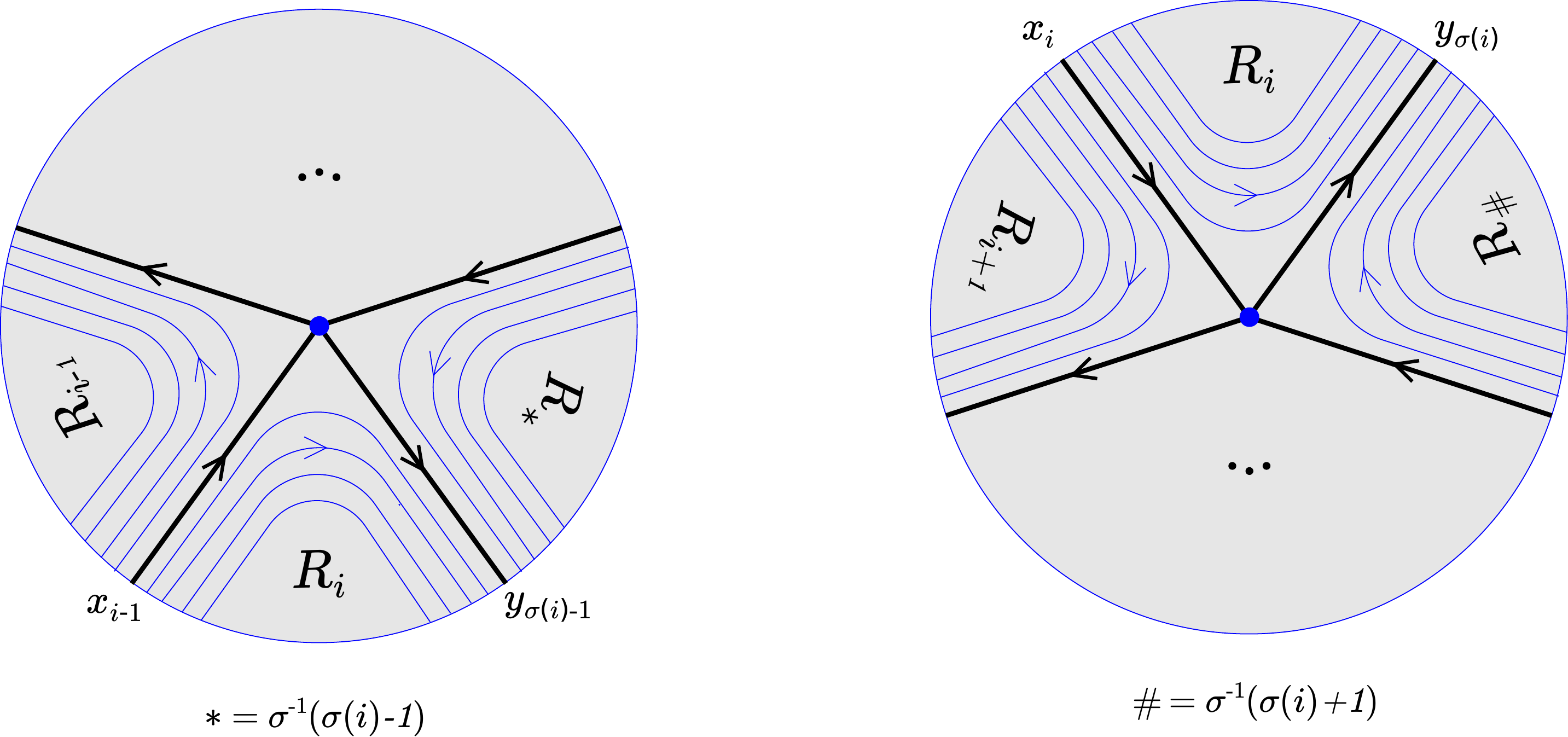}
\caption{\footnotesize On the left is depicted a conformal chart centered at the zero on the \emph{top} edge of $R_i$, for any $i\notin$ {\tiny $\{1,\sigma^{-1}(1),\sigma^{-1}(\sigma(n)+1)\}$}. On the right is the zero on the \emph{bottom} edge of $R_i$, for every $i\notin$ {\tiny $\{\sigma^{-1}(n),\sigma^{-1}(1)-1,\sigma^{-1}(\sigma(1)-1)\}$.} The excluded points correspond to $z_0$ and $R_{\sigma^{-1}(n)}$; the top and bottom.}
\label{around0}
\end{center}
\end{figure}

To determine whether the singularity at the bottom of $R_i$ has cone angle $\geq4\pi$, one can check if $\boxed{\sigma(i+1)\neq \sigma(i)+1}$, as can be seen from figure \ref{around0}. This works for all $i\in\{1,\cdots,n-1\}$, except $i=\sigma^{-1}(n)$ and $i=\sigma^{-1}(1)-1$. These exceptional values correspond to the singularity $z_0$ at the top of $R_1$ and the singularity at the bottom of $R_{\sigma^{-1}(n)}$. There are two cases that occur for these, depending on whether $R_1$ is directly below $R_n$ on the left of $J$ or not, namely whether $R_1$ is or isn't $R_{\sigma^{-1}(\sigma(n)+1)}$. We describe the differences as Case I and Case II below.

5.2.1. \textbf{Case I:} $\mathbf{\sigma(1)\neq\sigma(n)+1.}$ The rectangle $R_1$ is in position $\sigma(1)$ on the left, so its top edge is always $y_{\sigma(1)-1}$. Similarly, below $R_n$ to the left of $J$ is always $R_{\sigma^{-1}(\sigma(n)+1)}$. In case I, as $\sigma^{-1}(\sigma(n)+1)\neq1$,  $R_1$ is not directly below $R_n$. Thus, turning \emph{counter-clockwise} around $z_0$ from $R_1$ leads to $R_{\sigma^{-1}(\sigma(1)-1)}$ which is not $R_n$ (or indeed $R_{\sigma^{-1}(n)}$ either, since then $\sigma(1)=n+1$). On the other hand, turning \emph{clockwise} around $z_0$ leads first into $R_{\sigma^{-1}(1)}$ and then past $x_{\sigma^{-1}(1)-1}$ into $R_{\sigma^{-1}(1)-1}$. The situation is depicted in figures \ref{Case1} and \ref{around01}. 

If $z_0$ were to have cone angle equal to $2\pi$, turning either way around $z_0$ would lead to the same rectangle above, thus $R_{\sigma^{-1}(1)-1}$ would be the same as $R_{\sigma^{-1}(\sigma(1)-1)}$. Thus, in order to test (in case I) whether $z_0$ is a genuine singularity with multiplicity $m\geq1$, we check if $\boxed{\sigma^{-1}(1)-1\neq\sigma^{-1}(\sigma(1)-1)}$. 

\begin{figure}[h!]
\begin{center}
\includegraphics[scale=.25]{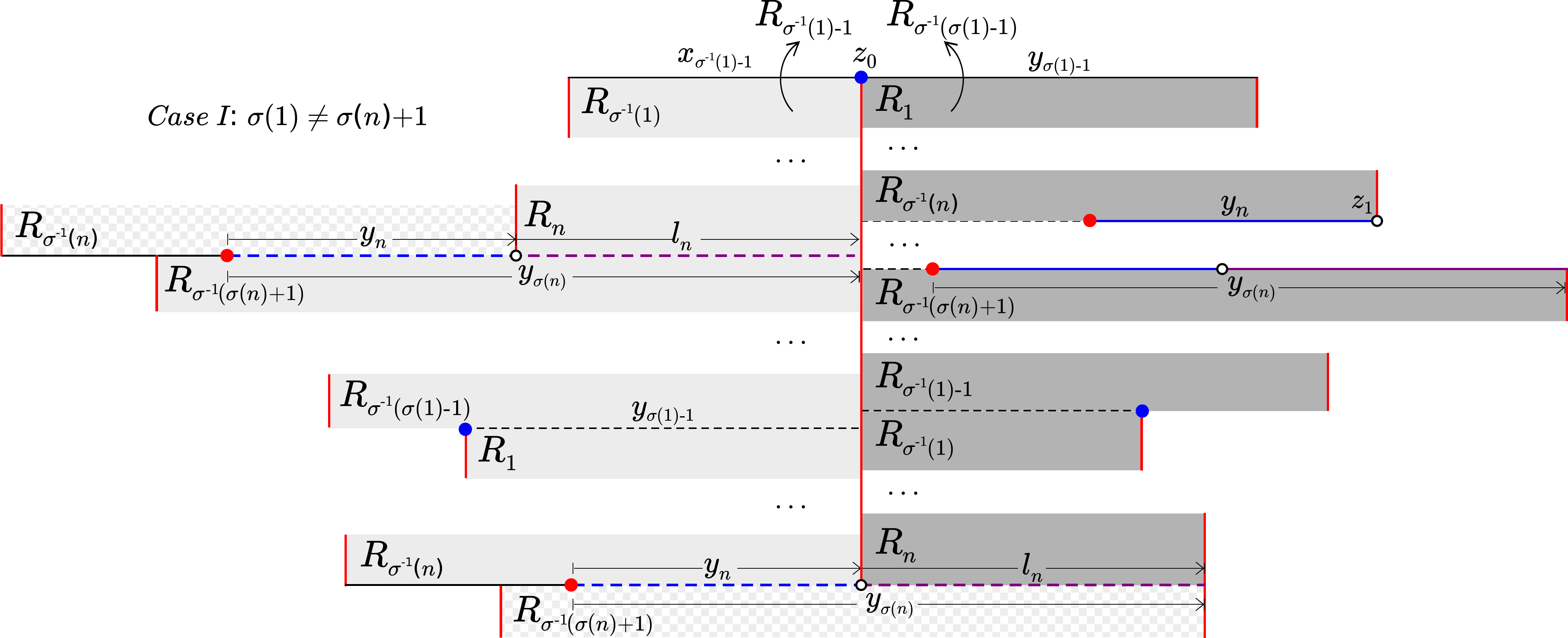}
\caption{\footnotesize Case I: $R_1$ is not directly below $R_n$ to the left of $J$, since $\sigma^{-1}(\sigma(n)+1)\neq1$. Thus the rectangle above $R_1$ is unambiguously $R_{\sigma^{-1}(\sigma(1)-1)}$.}
\label{Case1}
\end{center}
\end{figure}
\bigskip

\begin{figure}[h!]
\begin{center}
\includegraphics[scale=.24]{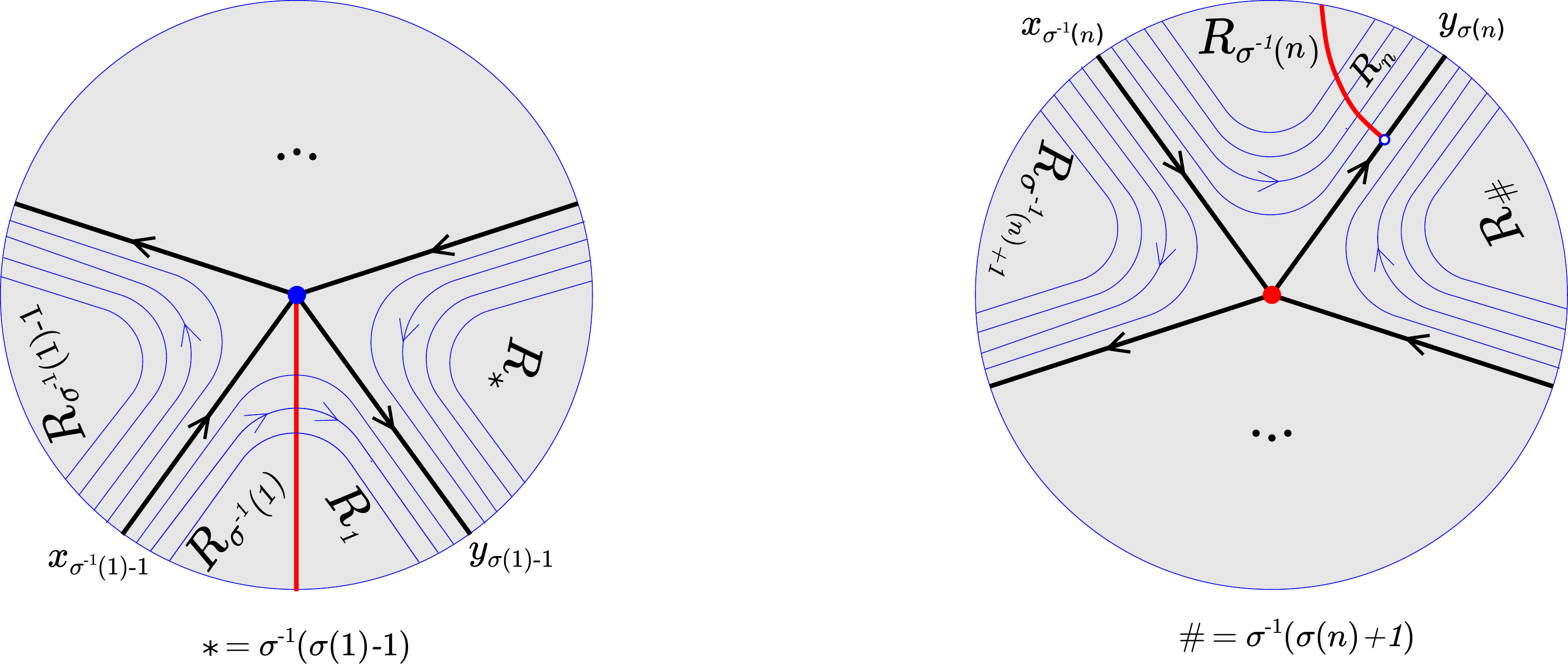}
\caption{\footnotesize Conformal charts centered at the singularity at the top of $R_1$ (left), and at the singularity at the bottom of $R_{\sigma^{-1}(n)}$ (right), are shown for case I.}
\label{around01}
\end{center}
\end{figure}
\bigskip

Similarly, turning clockwise around the singularity at the bottom of $R_{\sigma^{-1}(n)}$ leads to $R_{\sigma^{-1}(\sigma(n)+1)}$ which isn't $R_1$ (or indeed $R_{\sigma^{-1}(1)}$ either). Turning counter-clockwise instead leads to $R_{\sigma^{-1}(n)+1}$. So to test whether the singularity at the bottom of $R_{\sigma^{-1}(n)}$ has cone angle $\geq4\pi$, we check if $\boxed{\sigma^{-1}(n)+1\neq\sigma^{-1}(\sigma(n)+1)}$. 
\bigskip

5.2.2. \textbf{Case II:} $\mathbf{\sigma(1)=\sigma(n)+1.}$ Depicted in figures \ref{Case2} and \ref{around02}, in case II the top and bottom are vertically adjascent, so the two exceptional points of case I coincide. Since, $\sigma(1)=\sigma(n)+1$, we have that the rectangle above $R_1$ is $R_{\sigma^{-1}(\sigma(1)-1)} = R_n$, and indeed $R_n$ is glued above $R_1$ to the left of $J$. However, since $R_n$ doesn't have a singulatiry on its bottom edge, part of the top edge of $R_1$ is also glued to $R_{\sigma^{-1}(n)}$ along $y_n$. As a result, turning counter-clockwise around $z_0$ from $R_1$ doesn't lead to $R_n$, as the formula from case I would suggest, but rather leads into $R_{\sigma^{-1}(n)}$.\\[-1em]

\begin{figure}[h!]
\begin{center}
\includegraphics[scale=.27]{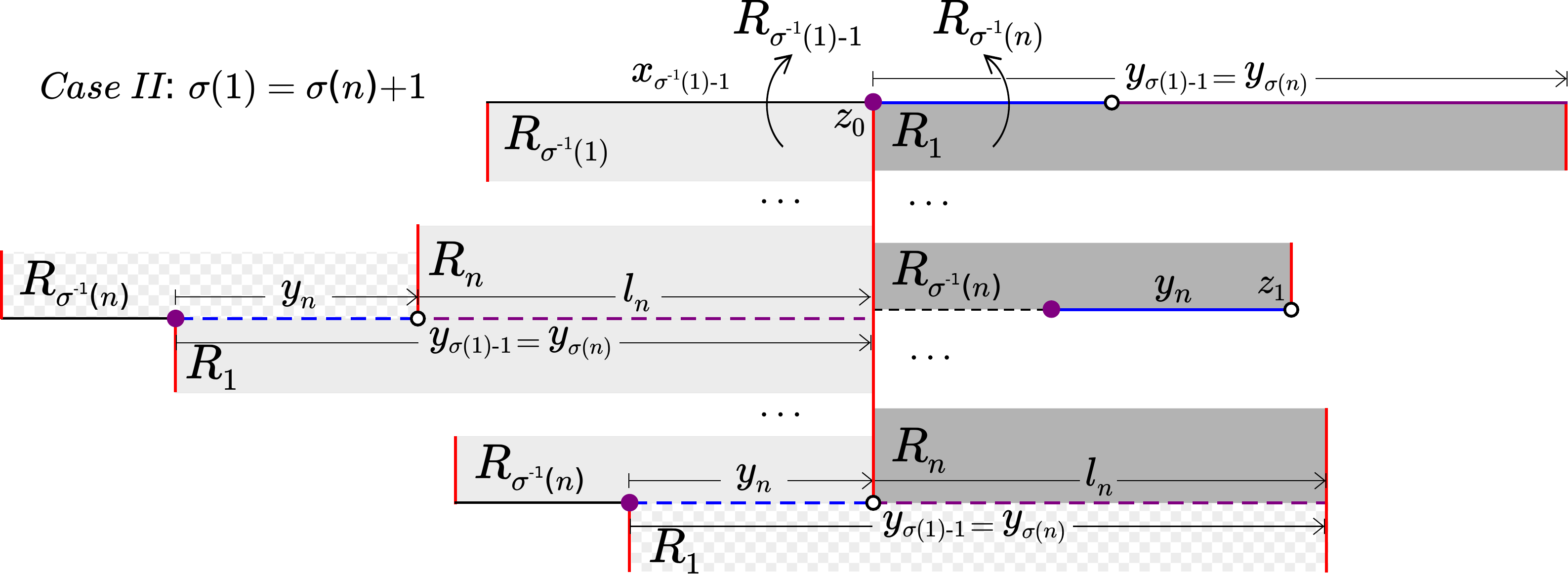}
\caption{\footnotesize Case II: When $R_1$ is below $R_n$ on the left of $J$, so the two exceptional singularities of case I coincide. The top of $R_1$ is first glued to $R_{\sigma^{-1}(n)}$ along $y_n$ and the rest of the top of $R_1$ is glued to the bottom of $R_n$.} 
\label{Case2}
\end{center}
\end{figure}

\begin{figure}[h!]
\begin{center}
\includegraphics[scale=.27]{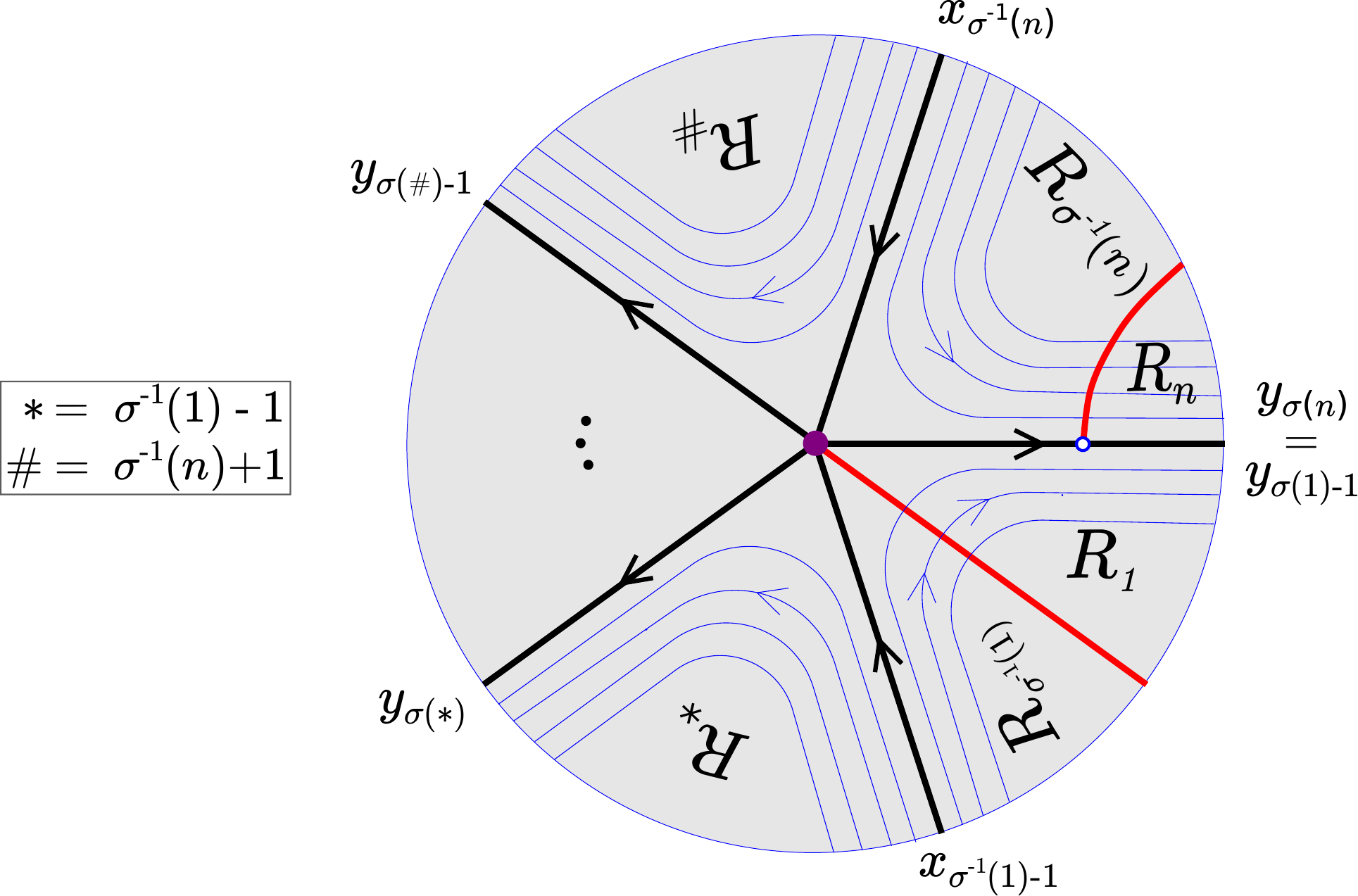}
\caption{\footnotesize Case II: Even though to the left of $J$, the rectangle above $R_1$ is $R_n$, turning counter-clockwise around $z_0$ from $R_1$ leads into the rectangle $R_{\sigma^{-1}(n)}$. }
\label{around02}
\end{center}
\end{figure}

Therefore, in case II, in order to determine if $z_0$ is indeed a singularity of angle $\geq4\pi$, the condition to check is $\boxed{\sigma^{-1}(1)\neq\sigma^{-1}(n)+1}$, since otherwise the incoming edges around $x_{\sigma^{-1}(1)-1}$ and $x_{\sigma^{-1}(n)}$ are the same. 
\bigskip

5.2.3: \textbf{Minimal} $\sigma$. Putting cases I and II together we make the following definition, which guaratees that none of the singularities have angle $2\pi$:

\begin{definition}
A permutation $\sigma$ is called \textsc{minimal} if the following conditions are satisfied:
\begin{enumerate}[(i)]
\item $\sigma(i+1)\neq\sigma(i)+1$, for each $i\in\{1,\cdots,n-1\}\setminus\{\sigma^{-1}(n),\sigma^{-1}(1)-1\}.$
\item When $\sigma(1)=\sigma(n)+1$, $\sigma^{-1}(1)\neq\sigma^{-1}(n)+1$.
\item When $\sigma(1)\neq\sigma(n)+1$, both $\sigma^{-1}(1)-1\neq\sigma^{-1}(\sigma(1)-1)$ and $\sigma^{-1}(n)+1\neq\sigma^{-1}(\sigma(n)+1)$.
\end{enumerate}
\end{definition}

It is clear that when the OBP $\tau_{\sigma, \mathbf{k}}$ is computed for an OF pA map, $\sigma$ is minimal by construction. 

\section{Main Theorem and Proof}\label{SecProof}We can now prove our main result.
\subsection{Main Theorem}
\begin{thm}\label{mainThm} Let $\sigma$ be a permutation of size $n\geq4$, and $\mathbf{k}$ a vector of $n$ positive integers such that $\tau_{\sigma, {\bf k}}$ is an admissible ordered block permutation. Let $A$ be the corresponding $n\times n$ matrix, with leading eigenvalue $\lambda$.  Choose positive $\lambda$-eigenvectors ${\bf l}$ for $A^\top$ and ${\bf h}$ for $A$, and let $R_1, \dots, R_n$ be rectangles with $R_i$ of size $l_i\times h_i$, in fact identified with $[0, l_i]\times [0,h_i] \subset \R^2$.
The rectangles fit together to give a genus $g$ Riemann surface $X$ with a holomorphic $1$-form $\omega$, and the maps 
\[
x+iy \mapsto \lambda x+i \frac y\lambda
\]
in each rectangle fit together to give an oriented-fixed pseudo-Anosov homeomorphism $f_{\sigma,{\bf k}}:X \to X$. When $\sigma$ is minimal, the genus $g=(n+1-\nu)/2$ and is bounded as $\frac{1}{4}(n+3)\le g\le\frac{1}{2}n$, where $\nu$ is the number of distinct singularities. 

Conversely, if $f:X \to X$ is an oriented-fixed pseudo-Anosov homeomorphism of a surface of genus $g$, the choice of a singularity $z_0$ and of a contracting segment $\tilde{J}$ of any length, emanating from $z_0$, defines an admissible ordered block permutation of size $n=2g+\nu-1$, bounded $2g\le n\le4g-3$, (after possibly reversing the orientation of the expanding foliation). 
\end{thm}

\begin{proof}
We have already shown the converse in Lemma \ref{OFpA-OBP}. Given an OF pA map, we may have to reverse the orientation of the expanding foliation, to ensure that the critical expanding segment furthest from $z_0$ along $J$ meets $J$ on the left. Moreover, since we have drawn critical segments from singularities of angle $\geq4\pi$, the permutation $\sigma$ is minimal.\\

To prove the direct statement, assume $\tau_{\sigma, {\bf k}}$ is an admissible OBP, with blocks $B_i$ and orbits $O_j$ computed as in section \ref{SecAdmiss}\textcolor{Red}{.1}. Let $A=(a_{ij})$ be the associated matrix defined by $a_{ij}=|B_i\cap O_j|$. $A$ is then aperiodic and has leading eigenvalue $\lambda>2$ (by Lemma \ref{lem}(ii)), so by the Perron-Frobenius theorem \cite{G74}, $A$ has left and right $\lambda$-eigenvectors $\mathbf{l}$ and $\mathbf{h}$ consisting of entirely positive entries. The eigenvectors $\mathbf{l}$ and $\textbf{h}$ are unique up to scaling by a positive real, so we fix some scaling, but keep it arbitrary so as to construct every pair of orientable foliations carrying a pA map, (not necessarily orientation preserving or fixing the critical trajectories, see remark \ref{AllAbelian}). 

Let $R_i$ be a rectangle of height $h_i$ and length $l_i$, $R_i=[0,l_i]\times[0,h_i]\subset\mathbb{C}\cong\mathbb{R}\bigoplus\iota\mathbb{R}$. Let $J$ be a vertical segment of length equal to the $L_1$ norm of $\mathbf{h}$, i.e. $|J|=\sum_{i=1}^nh_i$.  Arrange the rectangles $R_1,\cdots,R_n$ in order along the right side of $J$ as in Fig. \ref{GeneralShape}. The vertical right sides of the $R_i$ are identified to segments of $J$ according to the permutation $\sigma$. 

The next task is identifications along the $2n$ horizontal edges of the $R_i$. For this we need to uniquely determine the location of the singularities on the edges. With slight abuse of notation we will denote by $x_i$ both the edge itself and its length, and the same for the $y_i$. Thus we need to find the lengths $x_1,\cdots,x_{n-1}$ and $y_1,\cdots,y_n$  such that $y_{\sigma(1)-1}=l_1$, $y_{\sigma(n)}=y_n+l_n$, (cf. Figs. \ref{GeneralShape}, \ref{Case1}, \ref{Case2}), and for each $i$, $1\le i\le n-1$, \begin{equation}\label{x+y=l}
\begin{gathered}
x_i + y_{\sigma(i)} = l_i, \\ 
x_i + y_{\sigma(i+1)-1}  = l_{i+1},
\end{gathered}
\end{equation}
and such that all the terms are positive, except in the second equation for $i=\sigma^{-1}(1)-1$, for then we have $x_{\sigma^{-1}(1)-1} + 0 = l_{\sigma^{-1}(1)}$, (here we have to define $y_0:=0$.)

Moreover, since we are trying to define an \emph{ordered-fixed} map, each edge $x_i$ when stretched by the eigenvalue $\lambda$ of $A$ should yield an edge whose length is equal to $x_i$ plus the sum of the lengths of the rectangles that the $\tau$-orbit $O_i$ passes before the bottom strand of $B_i$; and similarly for the $y_i$. Namely, for each $i<n$, if $O'_i$ is the part of $O_i$ before the bottom strand $\sum_{r=1}^i k_r$ of $B_i$, (guaranteed to be in $O_i$ by Def. \ref{admissibledef}\textsc{(iii)}), and $O''_i$ is the part after the bottom strand, so that \begin{equation}
O_i = O'_i \sqcup \left\lbrace\sum_{r=1}^{i}k_r\right\rbrace\sqcup O''_i,
\end{equation} 
we must have 
\begin{equation}\label{eq_xi_yi}\begin{gathered}
\lambda x_i = \sum_{j\in O'_i}l_{\beta(j)} + x_i,\\
\lambda y_{\sigma(i)} =  y_{\sigma(i)} + \sum_{j\in O''_i}l_{\beta(j)}. 
\end{gathered}\end{equation}
Here $x_i$ is the bottom-left edge of $R_i$ and $y_{\sigma(i)}$ its bottom-right edge. These equations uniquely determine the values of $x_i$ and $y_{\sigma(i)}$ for $1\le i\le n-1$, as
\begin{equation}\label{def_xi_yi}
\boxed{x_i = \frac{1}{\lambda-1}\sum_{j\in O'_i}l_{\beta(j)}},\text{ and }
\boxed{y_{\sigma(i)} =  \frac{1}{\lambda-1}\sum_{j\in O''_i}l_{\beta(j)}}. 
\end{equation}

Note here that for $1\le i\le n-1$, the sets $O'_i$ and $O''_i$ are never empty, as the bottom strand of $B_i$ is never the first or the last element of $O_i$. This is because each orbit has its first element in $B_1$ and its last element in $B_{\sigma^{-1}(1)}$, since $\tau'=\sigma$, and $k_1\ge n$ and $k_{\sigma^{-1}(1)}\ge n$ by Lem.\,\ref{lem}(iii). For $B_1$, the bottom strand $k_1$ isn't the first element of $O_1$; while the last strand of $O_{\sigma^{-1}(1)}$ is the top strand of $B_{\sigma^{-1}(1)}$, hence also not the bottom strand. Thus since the $l_i>0$ and $\lambda>2$, we get $\forall i<n$, \begin{equation}
\boxed{x_i>0} \text{ and } \boxed{y_{\sigma(i)}>0.}
\end{equation}

Moreover, by virtue of $\mathbf{l}$ being the left $\lambda$-eigenvector, we know that $(A^\top{\bf l})_i=\lambda l_i$. And the left side $(A^\top{\bf l})_i=\sum_{j\in O_i}l_{\beta(j)}$ can be decomposed into the orbits before and after the bottom strand of $B_i$, thus giving us \begin{equation}
\sum_{j\in O'_i}l_{\beta(j)}+l_i+\sum_{j\in O''_i}l_{\beta(j)}=\lambda l_i, 
\end{equation}
which in turn simplifies to,\begin{equation}
\underbrace{\frac{1}{\lambda-1}\sum_{j\in O'_i}l_{\beta(j)}}_\text{$x_i$}+\underbrace{\frac{1}{\lambda-1}\sum_{j\in O''_i}l_{\beta(j)}}_\text{$y_{\sigma(i)}$}= l_i.
\end{equation}

Thus by defining $x_i$ and $y_i$ according to Eq. \ref{def_xi_yi}, we also get $\forall i<n$ \begin{equation}
\boxed{x_i+y_{\sigma(i)}=l_i}.
\end{equation}
We have thus uniquely determined the values of $x_1,\cdots,x_{n-1}$ and $y_1,\cdots,\widehat{y_{\sigma(n)}},\cdots,y_n$; the only missing length to determine is that of $y_{\sigma(n)}$.
\bigskip 

We can also decompose each orbit $O_i$ according to when the \emph{top} strand is crossed. Say we denote by $O^{(3)}_{i}$ the first part of the orbit $O_i$ before the top strand of $B_i$, after which the top strand is crossed, and the rest of the orbit as $O^{(4)}_i$. Thus 
\begin{equation}\label{orbits}
O_i = O^{(3)}_i\sqcup\left\lbrace1+\sum_{r=1}^{i-1}k_r\right\rbrace\sqcup O^{(4)}_i,
\end{equation} where $1+\sum_{r=1}^{i-1}k_r$ is the top strand of $B_i$, and in $O_i,\,\forall i\le n$, (\ref{admissibledef}\textsc{(iii)}). However, here $O^{(3)}_1$ and $O^{(4)}_{\sigma^{-1}(1)}$ are empty as the corresponding top strand is the first and last element of the respective orbits. For {\small $i\notin\{1,\sigma^{-1}(1)\}$}, $O^{(3)}_i$ and $O^{(4)}_i$ are all non-empty, for the same reason as above. Using Eq. \ref{orbits}, and the same derivation as for Eq. \ref{def_xi_yi}, we get for each $i$ with $1\le i\le n-1$,
\begin{equation}\label{def_xi_yi_2}
x_i = \frac{1}{\lambda-1}\sum_{j\in O^{(3)}_{i+1}}l_{\beta(j)},\text{\hspace{1cm} and \hspace{1cm}  }
y_{\sigma(i+1)-1} =  \frac{1}{\lambda-1}\sum_{j\in O^{(4)}_{i+1}}l_{\beta(j)},
\end{equation}
where $x_i$ is seen as the top-left edge of $R_{i+1}$, and $y_{\sigma(i+1)-1}$ as its top-right edge. Let us first see why the $x_i$ defined by Eq. \ref{def_xi_yi_2} are the same as defined by Eq. \ref{def_xi_yi} above. 

For each $i<n$, the orbits $O_i=\{i,\tau(i),\cdots,\tau^{\circ(p_i-1)}(i)\}$ and $O_{i+1}=\{i+1,\tau(i+1),\cdots,\tau^{\circ(p_{i+1}-1)}(i+1)\}$ start with adjacent elements $i$ and $i+1$ respectively, both in block $B_1$ (as $k_1\ge n$, Lem. \ref{lem}(iii)). Unless $\tau(i)$ is the bottom strand of $B_i$ (and therefore $\tau(i+1)=\tau(i)+1$ the top strand of $B_{i+1}$), the orbits continue to remain adjacent, and pass through the same block again. This behaviour continues by virtue of the definition of an admissible ordered block permutation, and the orbits $O_i$ and $ O_{i+1}$ only cease being adjacent when the orbit $O_i$ passes the bottom strand of $B_i$, (simultaneously as $O_{i+1}$ passes the top strand of $B_{i+1}$). The lengths $l_{\beta(j)}$ are thus the same for $j\in O'_i$ and $j\in O^{(3)}_{i+1}$, and hence Eqs. \ref{def_xi_yi} and \ref{def_xi_yi_2} define the same values for the $x_i, 1\le i\le n-1.$

Similarly, Eq. \ref{def_xi_yi_2} recalculates the $y_i$ (except $y_{\sigma(n)}$). These values include $y_1,\cdots,\widehat{y_{\sigma(1)-1}},\cdots,y_{n-1}$. So for $j\notin\{n,\sigma(n)\}$, the value $y_j$ has been twice computed, once according to the orbit $O_{\sigma^{-1}(j)}$ after the bottom strand (Eq. \ref{def_xi_yi}), and once according to the orbit $O_{\sigma^{-1}(j+1)}$ after the top strand (Eq. \ref{def_xi_yi_2}, since if $\sigma(i+1)-1=j$, then $O_{i+1}=O_{\sigma^{-1}(j+1)}$). The two orbits $O_{\sigma^{-1}(j)}, O_{\sigma^{-1}(j+1)}$ can be run backwards, starting with the $j^{th}$ and $(j+1)^{st}$ strands of $B_{\sigma^{-1}(1)}$ and, just as above, remain adjacent and within the same blocks, only ceasing to be adjacent when $O_{\sigma^{-1}(j)}$ hits the bottom strand of $B_{\sigma^{-1}(j)}$, (while $O_{\sigma^{-1}(j+1)}$ simultaneously hits the top strand of $B_{\sigma^{-1}(j+1)}$.) Thus again as above, the lengths  $l_{\beta(j)}$ are the same for $j\in O''_{\sigma^{-1}(j)}$ and $j\in O^{(4)}_{\sigma^{-1}(j+1)}$, and hence Eqs. \ref{def_xi_yi} and \ref{def_xi_yi_2} define the same values for the $y_i, 1\le i\le n-1.$

Now, by the same argument as for the $x_i$ above, we see that $\boxed{y_{\sigma(i+1)-1}>0}$, except when {\small $i=\sigma^{-1}(1)-1$}, since $O^{(4)}_{\sigma^{-1}(1)}=\emptyset$. In this case, we get that the top right edge of $R_{\sigma^{-1}(1)}$ is $0$, as it should be since the singularity $z_0$ is at the top-right corner of $R_{\sigma^{-1}(1)}$. And again, as $\mathbf{l}$ is an eigenvector of $A^\top$, we have for $1\le i\le n-1,$ \begin{equation}
(A^\top\mathbf{l})_{i+1} = \sum_{j\in O^{(3)}_{i+1}}l_{\beta(j)}+l_{i+1}+\sum_{j\in O^{(4)}_{i+1}}l_{\beta(j)}=\lambda l_{i+1}.
\end{equation}
Thus we also get $\forall i<n$ \begin{equation}\begin{gathered}
\underbrace{\frac{1}{\lambda-1}\sum_{j\in O^{(3)}_{i+1}}l_{\beta(j)}}_\text{$x_i$}+\underbrace{\frac{1}{\lambda-1}\sum_{j\in O^{(4)}_{i+1}}l_{\beta(j)}}_\text{$y_{\sigma(i+1)-1}$}= l_{i+1},\\
\text{ Or, }\boxed{x_i+y_{\sigma(i+1)-1}=l_{i+1}}
\end{gathered}
\end{equation}

We have thus uniquely determined all the $x_i$ and $y_i$, satisfying Eq. \ref{x+y=l}. The only thing left to check is $y_{\sigma(n)}=y_n+l_n$. For this, we subtract the two equations in \ref{x+y=l} to get\begin{equation}
y_{\sigma(i+1)-1}-y_{\sigma(i)}=l_{i+1}-l_i
\end{equation}

Adding over $i$ yields, \begin{equation}\left(\sum_{i=1}^{n-1}y_i-y_{\sigma(1)-1}\right)-\left(\sum_{i=1}^{n-1}y_i + y_n - y_{\sigma(n)}\right)=\sum_{i=1}^{n-1}(l_{i+1}-l_i)=l_n-l_1.
\end{equation} 
Simplifying and using $y_{\sigma(1)-1}=l_1$, we get: \begin{equation}\label{y_n}\boxed{y_{\sigma(n)}=y_n+l_n}.
\end{equation}
\bigskip

Thus, after a choice of scaling for the left and right $\lambda$-eigenvectors ${\bf l}$ and ${\bf h}$ of $A$, all the $x_i, y_i$ are uniquely determined. By Eqs. \ref{x+y=l} and \ref{y_n}, the rectangles $R_i$ can be glued by translations along segments of their boundaries: $R_i$ and $R_{i+1}$ along $x_i,\,\,\forall 1\le i<n$; and $R_{\sigma^{-1}(i)}$ and $R_{\sigma^{-1}(i+1)}$ along $y_i$, $\forall 1\le i<n, i\ne \sigma(n)$. $y_n$ followed by the non-singular edge (the bottom of $R_n$) are glued to $y_{\sigma(n)}$ (see Fig. \ref{Case1}).  This takes care of all the gluings, and all points have topological disks as neighborhoods, so we obtain a surface $S$ without boundary.

$J$ can be identified with a vertical segment in $\mathbb{C}$ and the interiors of the $R_i$ attached to it on the left and right can be seen as charts to $\mathbb{C}$. A rectangular neighborhood of $J$ of width $\le\min_i\{x_i\}$ can be seen as another chart. Transition maps between small enough neighborhoods of non-singular points are translations ($z\mapsto z+c$), hence conformal. Charts around the singularities can be obtained by applying the map $z\mapsto\sqrt[q]{z}$ in a small neighborhood $U$ of a singularity whose cone angle in the charts $R_i$ is $2\pi q,\, q\ge1$. Since $q\in\mathbb{N}$, $\sqrt[q]{z}$ is conformal in each $U\cap Int(R_i)$, and we obtain a Riemann surface $X$. The holomorphic $1$-form $dz$ in the charts $R_i$ can be pulled back to the surface to obtain an Abelian differential $\omega$.\bigskip

Moreover, we can define the map $f=f_{(\sigma,\mathbf{k})}:X\to X$ as follows. The collection of strands in $O_i$ form a connected long and thin rectangle in the surface. Stretch each rectangle $R_i$ horizontally by $\lambda$, and shrink it vertically by $\lambda$, and place it along this orbit $O_i$. The number of image rectangles $f(R_j)$ thus crossing $R_i$ is $k_i = \sum_{j=1}^na_{i j}$. Each has height $h_j/\lambda$, hence the total height of the rectangles passing through $R_i$ is $\sum_j a_{ij} (h_j/\lambda)$, which is equal to $h_i$ since $\mathbf{h}$ was chosen to be a $\lambda$-eigenvector of $A$. 

Thus the image rectangles fit inside the $R_i$ without overlapping interiors, and $f$ is clearly a homeomorphism in the interior of the $R_i$. The same can be seen in a small neighborhood of an edge $x_i$, since the orbits $O_i$ and $O_{i+1}$ remain adjacent till the bottom strand of $R_i$, and similarly for the $y_i$. The right endpoints of the $x_i$ are the singularities, which by Eq. \ref{eq_xi_yi} are fixed points of $f$, and their multi-sheeted neighborhoods are simply stretched horizontally and shrunk vertically.

Hence $f$ is a homeomorphism. And since the horizontal and vertical foliations of the plane provide orientable singular measured foliations on $X$ invariant under $f$, we have constructed a pseudo-Anosov map that leaves an Abelian differential invariant.

Finally, when $\sigma$ is minimal, the singularities all have cone angle $2\pi q,\,q\ge2$, since otherwise one of the minimality conditions would necessarily be violated. Hence the multiplicities are $m_i\ge1, \forall i\le\nu$, so Eq. \ref{n=2g+nu-1}, applies and so do the bounds \ref{ngBounds}.

\end{proof}
\bigskip

\begin{remark}
\label{AllAbelian}
In this remark, we justify the last sentence of the abstract regarding constructing every orientable foliation on a closed surface that is invariant under a pA map. Suppose $S$ is a closed surface, and $g:S\to S$ a pseudo-Anosov map with orientable invariant foliations $\mathcal{F}^u$ and $\mathcal{F}^s$, but $g$ doesn't necessarily fix all critical trajectories, or is even an orientation-preserving homeomorphism. $g^{\circ 2}$ is orientation preserving, and some higher power $g^{\circ N}$ necessarily fixes all critical trajectories. So $g^{\circ N}$ has an associated OBP. However, taking an iterate of $g$ neither changes the surface nor the invariant foliations. Thus we have constructed the surface with the foliations $\mathcal{F}^u$ and $\mathcal{F}^s$ out of the data of an OBP above. 
\end{remark}

\subsection{Concluding remarks}
The matrices $A$ of size $n=2g+\nu-1$ that one obtains for admissible OBPs as above have the property that for every eigenvalue $\mu$, $1/\mu$ is also an eigenvalue. This can be seen as follows. Define $\gamma_i$ to be the closed curve in the surface running down the middle of $R_i$ with its right end-point connected to its initial point along $J$. Then $\gamma_1,...,\gamma_n$ form a spanning set for the homology group $H_1(X,\mathbb{Z})\cong\mathbb{Z}^{2g}$. Let $S$ be the intersection matrix of the $\gamma_i$. So $S_{ii}=0$ and if $\sigma$ preserves the relative order of $i$ and $j$, then $S_{ij}=S_{ji}=0$. If $\sigma$ reverses their relative order, $S_{ij}$ is $+1$ above the diagonal and $-1$ below. $S$ is the intersection form, expressed in terms of a spanning set (when $\nu>1$) rather than a basis of the homology. Moreover, we see that $(A^\top SA)_{ij}$ equals the number of points in $f(\gamma_i)\cap f(\gamma_j)$ and therefore equals $S_{i\,j}$. So\begin{equation}
A^\top SA=S.
\end{equation} Thus if $\mu$ is an eigenvalue of $A$ with eigenvector $\overrightarrow{w}$, we get \begin{equation}
(S\overrightarrow{w}) =  A^\top SA\overrightarrow{w} = \mu A^\top (S\overrightarrow{w})
\end{equation} so $A^\top$, and therefore $A$, has eigenvalue $1/\mu$. This is a well known property of pseudo-Anosov stretch factors and is related to the characteristic polynomial of the induced action $f_*$ on $H_1$ being palindromic, meaning its coefficients are the same read forwards and backwards.

Given an aperiodic integer matrix $A$ whose characteristic polynomial is palindromic, one can ask whether such a matrix is induced by a pseudo-Anosov map with orientable foliations. The answer to this question is still not known. 

While we cannot answer this, we can provide a finite algorithm to decide whether the given matrix $A$ is the matrix of an admissible OBP:
\begin{enumerate}
\item Let $k_1,\cdots,k_n$ be the row-sums of $A$.
\item For each $\sigma$ in the permutation group of $\mathbb{N}_n$, check if the OBP with $\sigma$ and $\mathbf{k}=(k_1,\cdots,k_n)$ is admissible and generates the same matrix $A$.
\end{enumerate}

Finally, we remark that the Perron root $\lambda$ of the matrix $A$ associated to an admissible OBP $\tau_{\sigma,\mathbf{k}}$ satisfies both 
\begin{equation}
\begin{gathered}
2\le\min_{1\le i\le n}\{k_i\}<\lambda<\max_{1\le i\le n}\{k_i\},\\
\hspace{.6cm}\min_{1\le i\le n}\{p_i\}<\lambda<\max_{1\le i\le n}\{p_i\}.
\end{gathered}
\end{equation}
\bigskip

{\bf \noindent Acknowledgements: } We would like to thank Ana Anusic, Hyungryul Baik, Sylvain Bonnot, Andr\'e de Carvalho, Arcelino do Nascimento, Chenxi Wu and Xuan Zhang for many helpful discussions. We would also like to thank the referee for their helpful comments. The examples were found using code written in C++, and the pictures were drawn using Inkscape and Mathematica. We would also like to thank Ummaya Jan and Hisham Rafiqi for their help in the drawing and coloring of the figures.
\bigskip

\bibliographystyle{abbrv}
\bibliography{DilatationsandOBPs}
\end{document}